\newcommand{\RR}{\mathbb R}
\newcommand{\CC}{\mathbb C}
\newcommand{\DD}{\mathbb D}
\renewcommand{\Re}{\mathop{\rm Re}\nolimits}
\renewcommand{\Im}{\mathop{\rm Im}\nolimits}
\newcommand{\trace}{\mathop{\mathrm{trace}}\nolimits}
\newcommand{\ii}{\mathrm{i}}
\newcommand{\jj}{\mathrm{j}}
\newcommand{\qq}{\mathrm{q}}
\newcommand{\manifold}[1]{\mathcal{#1}}
\newcommand{\M}{\manifold{M}}
\newcommand{\D}{\manifold{D}}
\newcommand{\vect}[1]{\mathrm{#1}} 
\newcommand{\x}{\vect{x}}
\newcommand{\y}{\vect{y}}
\newcommand{\va}{\vect{a}}
\newcommand{\vb}{\vect{b}}
\newcommand{\vX}{\vect{X}}
\newcommand{\vY}{\vect{Y}}
\newcommand{\vZ}{\vect{Z}}
\newcommand{\vW}{\vect{W}}
\newcommand{\vH}{\vect{H}}
\newcommand{\n}{\vect{n}}
\newcommand{\m}{\vect{m}}
\newcommand{\e}{\mathrm{e}}
\newtheorem{theorem}{Theorem}[section]
\newtheorem{prop}[theorem]{Proposition}
\theoremstyle{remark}
\newtheorem{remark}{Remark}[section]
\theoremstyle{definition}
\newtheorem{dfn}{Definition}[section]
\newcommand{\ds}{\displaystyle}
\begin{document}

\begin{abstract}
We introduce canonical coordinates on minimal time-like surfaces in the n-dimensional Minkowski space and prove the existence 
and the uniqueness of these parameters. With respect to these coordinates the coefficients of the first fundamental form  
are expressed by the invariants of the surface.
 
On any time-like surface we introduce a special complex function over the algebra of the double numbers and apply 
the analysis over the algebra of the double numbers as a convenient tool to study these surfaces. Then the canonical 
coordinates on minimal time-like surfaces are characterized by a natural condition for this complex function. 

We consider the hyperbola of the normal curvature of any minimal time-like surface and give a geometric interpretation of
the canonical coordinates in terms of the elements of this hyperbola. 
\end{abstract}

\title
{Canonical coordinates on minimal time-like surfaces in the n-dimensional Minkowski space}%

\author{Georgi Ganchev and Krasimir Kanchev}

\address{Bulgarian Academy of Sciences, Institute of Mathematics and Informatics,
Acad. G. Bonchev Str. bl. 8, 1113 Sofia, Bulgaria}
\email{ganchev@math.bas.bg}%

\address {Department of Mathematics and Informatics, Todor Kableshkov University of Transport,
158 Geo Milev Str., 1574 Sofia, Bulgaria}%
\email{kbkanchev@yahoo.com}%

\subjclass[2000]{Primary 53A10, Secondary 53B30}%

\keywords{Minimal time-like surfaces in the n-dimensional Minkowski space, 
canonical coordinates}%

\maketitle

\thispagestyle{empty}

\tableofcontents


\section{Introduction}\label{sec_introduction}
In the classical differential geometry of the surfaces in Euclidean space $\RR^3$ the existence of principal 
parameters is an important property, which simplifies the calculations and the geometric interpretation of the  
results obtained. Further these parameters can be specialized for some special classes of surfaces. In \cite{GM} 
it was shown that any Weingarten surface in $\RR^3$ admits special principal parameters in which the coefficients 
of the first fundamental form are expressed through invariants of the surface. This means that these parameters
cannot be improved further and that is why they were called \emph{canonical}. Using canonical parameters, 
it was proved in \cite{GM} that the local geometry of Weingarten surfaces is determined by one function
satisfying one PDE. The process of introducing similar parameters on surfaces of co-dimension two began with the 
minimal surfaces. Geometrically determined special isothermal parameters on minimal surfaces in $\RR^4$ were 
introduced in \cite{Itoh}. Further, these parameters were used in \cite{T-G-1} to prove that the local geometry 
of minimal surfaces in $\RR^4$ is determined by two invariant functions satisfying two PDEs. In \cite{G-K-1}, on 
the base of the canonical coordinates we solved explicitly the system of background PDEs of minimal surfaces in 
$\RR^4$ in terms of two holomorphic functions in $\CC$.

Canonical coordinates on minimal space-like surfaces in Minkowski space-time $\RR^4_1$ were used in \cite{A-P-1} 
to prove that the local geometry of these surfaces is determined by two invariant functions satisfying two PDEs. 
In \cite{G-K-2} we gave an explicit solution to the system of natural (background) PDEs of minimal space-like 
surfaces in $\RR^4_1$.

Canonical coordinates on minimal space-like surfaces in the pseudo-Euclidean 4-space with neutral metric $\RR^4_2$
were introduced and used in \cite{S-1} to prove similar results. In \cite{G-K-arXiv-3} we gave another approach to 
the canonical coordinates on minimal space-like surfaces in $\RR^4_2$ and obtained canonical Weierstrass formulas 
for the minimal surfaces in $\RR^4_2$ in terms of two holomorphic functions in $\CC$. In \cite{G-K-3} we solved 
explicitly the system of natural PDEs of minimal space-like surfaces in $\RR^4_2$.

In \cite{G-M-1} time-like surfaces in Minkowski space-time were studied in the above mentioned scheme: it was 
proved that these surfaces admit locally canonical parameters and their geometry is determined by two invariant 
functions, satisfying a system of two natural PDEs.

Minimal Lorentzian surfaces in $\RR^4_2$, whose Gauss curvature $K$ and curvature of the normal connection $\varkappa$
satisfy the inequality $K^2-\varkappa^2 > 0$, were studied in \cite{S-2} and \cite{M-A-1}. 

In this paper we study time-like surfaces in an arbitrary dimensional Minkowski space $\RR^n_1$ and prove that 
these surfaces admit locally canonical parameters. The aim of our investigations is to apply the analysis 
over the double numbers in $\DD$ as a convenient tool especially in the geometry of minimal time-like surfaces. 
To this end, we consider any  time-like surface $\M=(\D ,\x(u,v))$ parametrized by isothermal parameters 
$(u,v) \in \D \subset \RR^2$. Then we introduce the "complex" variable $t=u+\jj v \in \DD$, where $\jj^2=1$\,. 
Thus any function on $\M$ can be considered as a function of $t$. 

Considering the natural extension $\DD^n_1$ over $\DD$ of $\RR^n_1$, in Section \ref{sect_Phi-tl} we introduce on any time-like 
surface $\M$ the $\DD^n_1$-valued function $\Phi$ by equality \eqref{Phi_def-tl}. 

In Section \ref{sect_Phi_Psi-tl} we characterize minimal time-like surfaces in $\mathbb R^n_1$ via the function $\Phi$. 
Theorem \ref{Min_x_Phi-thm-tl} states that a time-like surface $\M$ is minimal if and only if 
the function $\Phi$ is holomorphic in $\DD$.
Theorem \ref{Min_x_Psi-thm-tl} gives the representation of the minimal surface $\M$ 
through the primitive function  $\Psi $ of $\Phi$.

In Section \ref{sect_K-Phi-tl} we obtain formulas \eqref{K_Phi-tl} and \eqref{K_Phi_bv-tl}
for the Gauss curvature $K$ of the minimal time-like surface expressed by the function $\Phi $. 

In Section \ref{sect_can_Rn1-def-tl} we introduce degenerate points 
on a minimal time-like surface and characterize them geometrically.
In Theorem \ref{Can_Coord-exist-tl} we prove that any minimal time-like surface free of degenerate points 
admits locally canonical coordinates, which are characterized by the condition $\Phi'^2=1$\,. 
In Theorem \ref{Can_Coord-uniq-tl} we prove the uniqueness of the canonical coordinates. 

In Section \ref{sect_hyperb_Rn1-tl} we consider the hyperbola of the normal curvature of a minimal time-like surface 
and give a geometric interpretation of the canonical parameters through the elements of this hyperbola.

We note that this paper prepares the application of our approach to further investigation of minimal time-like surfaces 
in $\RR^4_1$ and $\RR^4_2$.


\section{Preliminaries}\label{sect_preliminaries-tl}

We denote by $\RR^n_1$ the standard $n$-dimensional Minkowski space with scalar product:
\begin{equation}\label{Mink-tl}
\va\cdot \vb = - a_1 b_1 + a_2 b_2 + \cdots + a_n b_n\,.
\end{equation}

 Let $\M_0$ denote a two-dimensional differentiable manifold, and $\x$ -- an immersion of $\M_0$ in $\RR^n_1$.
Then $\M=(\M_0 ,\x)$ (or only $\M$) is a  (regular) surface in $\RR^n_1$. $T_p(\M) \subset \RR^n_1$ will stand for
the tangent space of $\M$ at the point $p\in\M$, and $N_p(\M)$ will denote the normal space of $\M$ at $p$, 
which is the orthogonal complement of $T_p(\M)$ in $\RR^n_1$. The scalar product in $\RR^n_1$ induces a scalar product
in $T_p(\M)$. The surface $\M$ is said to be \emph{time-like}, if the induced scalar product in $T_p(\M)$ is indefinite.

 Further, $(u,v)$ will denote a pair of coordinates (parameters) in a domain $\D \subset \RR^2$. Thus, the
immersion $\x$ generates a vector function $\x (u,v) : \D \to \RR^n_1$. Since our considerations are local, we 
suppose that $\M$ is given by $(\D ,\x)$, where $\D \subset \RR^2$.
 
For the coefficients of the first fundamental form of $\M$ we use the standard denotations $E=\x_u^2$, 
$F=\x_u\cdot\x_v$ and $G=\x_v^2$. In classical denotations, the first fundamental form is written in the form:
\[
\mathbf{I}=E\,du^2+2F\,dudv+G\,dv^2.
\]

 It is well known, that there exist locally, around any point $p \in \M$, isothermal coordinates characterized by
the conditions $E=-G$ and $F=0$. Further, we suppose that $(u,v)$ are isothermal coordinates on $\M$ and 
the enumeration of these coordinates is such that $E<0$, $G>0$. 

\medskip
Studying time-like surfaces, it is convenient to identify the coordinate plane $\RR^2$ with the plane of the 
\emph{double numbers} $\DD$, which are defined in the following way:
\[
\DD=\{t=u+\jj v : \  u,v \in \RR ,\ \jj^2=1 \}\,.
\]
Along with the real coordinates $(u,v)$, we also consider the coordinate $t=u+\jj v$, \ $t \in \D\subset\DD$. 
In this way, all functions on $\M$ will also be considered as functions of the variable~$t$.

 If $t=u+\jj v \in \DD$, then $|t|^2$ denotes the square of the modulus (the amplitude), which is given by:
\[
|t|^2 = t\bar t = (u+\jj v)(u-\jj v) = u^2-v^2.
\]
We denote by $\DD_0$ the set of non-invertible elements in $\DD$, characterized by the conditions:
\begin{equation}\label{D0}
\DD_0 = \{t\in\DD : \  |t|^2 = 0 \} = \{t\in\DD : \  \Re t = \pm \Im t \}\,.
\end{equation} 
Further, we denote by $\DD_+$ the set of the "positive"\ elements in $\DD$:
\begin{equation}\label{D+}
\DD_+ = \{t\in\DD : \  |t|^2 > 0; \ \Re t > 0 \} = \{t\in\DD : \  \Re t \pm \Im t > 0 \}\,.
\end{equation}

 In many cases, making calculations with the numbers in $\DD$ it is more convenient instead of the basis $(1,\jj)$ 
to use the basis $(\qq,\bar\qq)$, where:
\begin{equation}\label{qq-def}
\qq = \frac{1-\jj}{2} \,; \qquad \bar\qq = \frac{1+\jj}{2}\,.
\end{equation}
This basis is said to be a \emph{diagonal basis} or \emph{null-basis}. It is easily seen that:
\begin{equation}\label{qq-prop}
\qq^2 = \qq \,; \qquad \bar\qq^2 = \bar\qq \,; \qquad \qq \bar\qq = 0 \,.
\end{equation} 
Any number in $t\in\DD$ is represented with respect to the null-basis $(\qq,\bar\qq)$ in the following way:
\begin{equation}\label{t-qq}
t = u+\jj v = (u-v)\qq + (u+v)\bar\qq \,.
\end{equation}
Using \eqref{qq-prop}, it follows that the addition, as well as the multiplication with respect to the basis 
$(\qq,\bar\qq)$, are accomplished by components:
\begin{equation}\label{sum-prod-qq}
\begin{array}{rl}
(a_1\qq + b_1\bar\qq) + (a_2\qq + b_2\bar\qq) \!\!\! &= (a_1 + a_2)\qq + (b_1 + b_2)\bar\qq\,,\\
(a_1\qq + b_1\bar\qq)   (a_2\qq + b_2\bar\qq) \!\!\! &= (a_1   a_2)\qq + (b_1   b_2)\bar\qq\,.
\end{array}
\end{equation}
This means that $\DD$ as an algebra is isomorphic to two copies of $\RR$:\, $\DD=\RR\oplus\RR$.
It follows from \eqref{t-qq} that the sets $\DD_0$ and $\DD_+$ are represented as follows:
\[
\DD_0 = \{t=a\qq+b\bar\qq \in \DD : \  a=0\  \text{or}\  b=0 \}\,.
\]
\[
\DD_+ = \{t=a\qq+b\bar\qq \in \DD : \  a>0\  \text{and}\  b>0 \}\,.
\]

 If $f:\D\to\DD$ is a differentiable function, then its differential is given by:
\[
df = \frac{\partial f}{\partial t} dt + \frac{\partial f}{\partial \bar t} d\bar t \,,
\]  
where $\frac{\partial }{\partial t}$ and $\frac{\partial }{\partial \bar t}$ by definition are:
\begin{equation}\label{delta-t-delta-uv}
\frac{\partial }{\partial t} = 
\frac{1}{2}\left(\frac{\partial }{\partial u} + \jj \frac{\partial }{\partial v}\right) ; \qquad
\frac{\partial }{\partial \bar t} = 
\frac{1}{2}\left(\frac{\partial }{\partial u} - \jj \frac{\partial }{\partial v}\right) .
\end{equation}
The function $f$ is said to be \emph{holomorphic}, if $\frac{\partial f}{\partial \bar t} = 0$ and respectively
\emph{anti-holomorphic}, if $\frac{\partial f}{\partial t} = 0$\,. If $f=g+\jj h$ is the representation of $f$ with
a "real"\ part $g$ and an "imaginary"\ part $h$, then $f$ is holomorphic if and only if the conditions, analogous to the 
Cauchy-Riemann conditions, are fulfilled:
\begin{equation}\label{CR-DD}
h_u = g_v \,; \qquad h_v = g_u \,.
\end{equation}

 These equalities show that, similarly to the case of $\CC$, a map is conformal with respect to 
the indefinite metric $\RR^2_1$, if and only if the map is given by a holomorphic or an anti-holomorphic 
function in $\DD$. There exists also an analogue of the inverse function theorem: 
If $f$ is a holomorphic function satisfying the condition $|f'|^2 \neq 0$\,, then there exists at least 
locally a unique inverse holomorphic function. In particular, taking an $n$th root, where $n$ is an entire 
positive number, gives a holomorphic function, defined and with values in $\DD_+$\,.
Foundations of the algebra and the analysis of the double numbers $\DD$ can be found e.g. in \cite{A_F-1}, 
\cite{K-1}, \cite{M-R-1}.

\medskip

 Denote by $\DD^n_1$ the set $\DD^n$ endowed with the bilinear product $\va\cdot \vb$, which is the natural extension 
of the product in $\RR^n_1$, given by \eqref{Mink-tl}. Then the scalar square $\va^2\in\DD$ of $\va\in\DD^n_1$ is
given by \[\va^2=\va\cdot \va = -a_1^2+a_2^2+\cdots + a_n^2\,.\] The square of the norm $\|\va\|^2\in\RR$ of
$\va\in\DD^n_1$ is the number
\[\|\va\|^2=\va\cdot\bar \va = -|a_1|^2+|a_2|^2+\cdots + |a_n|^2 ,\]
which is not necessarily positive.

 Using the standard embedding of $\RR^n_1$ into $\DD^n_1$, we shall consider the "complexified"\ tangent space
$T_{p,\DD}(\M)$ of $\M$ at the point $p$ as a subspace of $\DD^n_1$, which is the linear span of $T_p(\M)$ in 
$\DD^n_1$. Similarly, we shall identify "complexified"\ normal space $N_{p,\DD}(\M)$ of $\M$ at the point $p$
with the corresponding subspace of $\DD^n_1$, which is the linear span of $N_p(\M)$ in $\DD^n_1$.

Since $T_{p,\DD}(\M)$ and $N_{p,\DD}(\M)$ are generated by the real subspaces $T_p(\M)$ and $N_p(\M)$, respectively, 
then they are mutually orthogonal and closed with respect to the complex conjugation in $\DD^n_1$. Therefore, we have 
the following orthogonal decomposition:
\[\DD^n_1 = T_{p,\DD}(\M) \oplus N_{p,\DD}(\M)\,.\]

 We denote by $\va^\top$ the orthogonal projection of a vector $\va$ of $\DD^n_1$ into the complexified tangent space
of $\M$. Similarly, we denote by  $\va^\bot$ the orthogonal projection of $\va$ into the complexified normal space of ${\M}$. 
Then any vector $\va$ is decomposed as follows:
\[\va=\va^\top + \va^\bot .\]

\medskip

 Let $\nabla$ be the canonical linear connection in $\RR^n_1$. If $\vX$ and $\vY$ are tangent vector fields
and $\n$ is a normal vector field for $\M$, then the Gauss and Weingarten formulas are as follows: 
\[\nabla_\vX \vY = \nabla^T_\vX \vY + \sigma(\vX,\vY)\,,\]
\[\nabla_\vX \n = -A_\n(\vX) + \nabla^N_\vX \n \,, \] 
where $\nabla^T$ is the Levi-Civita connection on $\M$, $\sigma(\vX,\vY)$  is the second fundamental form,
$A_\n$ is the Weingarten map with respect to $n$ and $\nabla^N_\vX \n$ is the normal connection on $\M$.
The Weingarten map and the second fundamental form are related by the equality:
\[ A_\n\vX\cdot\vY = \sigma(\vX,\vY)\cdot \n \;. \]

The curvature tensor $R$ on $\M$ and the curvature tensor $R^N$ of the normal connection are defined as follows: 
\[ R(\vX,\vY)\vZ = \nabla^T_\vX \nabla^T_\vY \vZ - \nabla^T_\vY \nabla^T_\vX \vZ - \nabla^T_{[\vX,\vY]} \vZ\,,\]
\[ R^N(\vX,\vY)\n = \nabla^N_\vX \nabla^N_\vY \n - \nabla^N_\vY \nabla^N_\vX \n - \nabla^N_{[\vX,\vY]} \n \,.\]
The covariant derivative of $\sigma$ is calculated by the formula:
\[ (\overline\nabla_\vX \sigma)(\vY,\vZ) =
\nabla^N_\vX \sigma(\vY,\vZ)-\sigma(\nabla^T_\vX\vY,\vZ)-\sigma(\vY,\nabla^T_\vX\vZ)\,.\]

 The tensors $R$, $R^N$ and $\overline\nabla\sigma$ satisfy the fundamental equations in the theory of Riemannian
submanifolds: 

Gauss equation: 
\begin{equation}\label{Gauss-tl}
R(\vX,\vY)\vZ\cdot\vW = \sigma(\vX,\vW)\sigma(\vY,\vZ) - \sigma(\vX,\vZ)\sigma(\vY,\vW)\,,
\end{equation}
Codazzi equation: 
\begin{equation}\label{Codazzi-tl}
(\overline\nabla_\vX \sigma)(\vY,\vZ) = (\overline\nabla_\vY \sigma)(\vX,\vZ)\,, 
\end{equation}
Ricci equation:
\begin{equation}\label{Ricci-tl}
R^N(\vX,\vY)\n\cdot\m = [A_\n,A_\m]\vX\cdot\vY\,.
\end{equation}
In the last equality $[A_\n,A_\m]$ denotes the commutator $A_\n A_\m - A_\m A_\n$ of $A_\n$ and $A_\m$. 

\smallskip

 The basic invariants of any time-like surface $\M$ in $\RR^n_1$ are its \emph{mean curvature} $\vH$
and \emph{Gauss curvature} $K$. Let $\vX_1$ and $\vX_2$ be two orthonormal tangent vector fields on $\M$, such that
$\vX_1^2=-1$\,. Then $\vH$ is given by:
\begin{equation}\label{H-def-tl}
\vH = \frac{1}{2}\trace\sigma = \frac{1}{2}(-\sigma(\vX_1,\vX_1)+\sigma(\vX_2,\vX_2)) \,.
\end{equation}
The Gauss curvature $K$ by definition is:
\begin{equation}\label{K-def-tl}
K = -R(\vX_1,\vX_2)\vX_2\cdot\vX_1 \,,
\end{equation}
or in view of \eqref{Gauss-tl}:
\begin{equation}\label{K_sigma-tl}
K = -\sigma(\vX_1,\vX_1)\sigma(\vX_2,\vX_2) + \sigma^2(\vX_1,\vX_2)\,.
\end{equation}

In this paper we study minimal time-like surfaces, which are determined by:
\begin{dfn}\label{MinS-def-tl}
A time-like surface in $\RR^n_1$ is said to be a \textbf{minimal time-like surface} if $\vH=0$\,.
\end{dfn}
It follows from formula \eqref{H-def-tl} that any minimal time-like surface satisfies the condition:
$\sigma(\vX_2,\vX_2)=\sigma(\vX_1,\vX_1)$. Then the equality \eqref{K_sigma-tl} gets the form: 
\begin{equation}\label{K_sigma_Min-tl}
K = -\sigma^2(\vX_1,\vX_1) + \sigma^2(\vX_1,\vX_2)\,.
\end{equation}


\section{Definition and basic properties of the $\DD^n_1$-valued vector function $\Phi$.}\label{sect_Phi-tl}

For any time-like surface $\M=(\D,\x)$ in $\RR^n_1$, parametrized by local coordinates $(u,v)\in \D$, 
we define the vector function $\Phi(t)$,\ $t=u+\jj v \in \DD$,\ $\Phi(t) \in \DD^n_1$ as follows:
\begin{equation}\label{Phi_def-tl}
\Phi(t)=2\frac{\partial\x}{\partial t}=\x_u+\jj\x_v \,.
\end{equation}
Further we use this function as the basic analytic tool in the study of local properties of minimal time-like 
surfaces in $\RR^n_1$. First we obtain the basic algebraic and analytic properties of $\Phi$.

Squaring equality \eqref{Phi_def-tl}, we have:
 \[ \Phi^2=(\x_u + \jj\x_v)^2=\x_u^2+\x_v^2+2\jj\,\x_u \x_v \,.\]
We get from here the following equivalent statements:
\[\Phi^2=0 \  \Leftrightarrow\  \begin{array}{l} \x_u^2+\x_v^2=0\\  \x_u \x_v=0 \end{array} \ 
\Leftrightarrow \  \begin{array}{l} E=\x_u^2=-\x_v^2=-G\\ F=0\,. \end{array}\]
Therefore we have
\begin{prop}
If $\M$ is a time-like surface in $\RR^n_1$, then the coordinates $(u,v)$ are isothermal if and only if
$\Phi^2=0$\,.\end{prop}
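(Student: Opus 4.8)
The plan is to compute $\Phi^2$ directly from the definition \eqref{Phi_def-tl} and translate the vanishing of this double number into conditions on the coefficients $E$, $F$, $G$. Since $\Phi = \x_u + \jj\x_v$ is a vector in $\DD^n_1$, I would square it using the bilinear product extending \eqref{Mink-tl}, keeping in mind that $\jj^2 = 1$. This gives
\[
\Phi^2 = (\x_u + \jj\x_v)\cdot(\x_u + \jj\x_v) = \x_u^2 + \jj^2\,\x_v^2 + 2\jj\,\x_u\cdot\x_v = (\x_u^2 + \x_v^2) + 2\jj\,(\x_u\cdot\x_v),
\]
which is exactly the expression already displayed just before the statement. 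Writing this in terms of the fundamental coefficients, $\Phi^2 = (E + G) + 2\jj F$.

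The next step is to read off the equivalence by separating real and imaginary parts. A double number $a + \jj b$ vanishes if and only if $a = 0$ and $b = 0$, because $1$ and $\jj$ are linearly independent over $\RR$. Hence $\Phi^2 = 0$ is equivalent to the simultaneous conditions $E + G = 0$ and $F = 0$, that is, $G = -E$ together with $F = 0$. By the normalization fixed in Section \ref{sect_preliminaries-tl}, where isothermal coordinates on a time-like surface are characterized precisely by $E = -G$ and $F = 0$, this pair of conditions is exactly the definition of isothermal parameters. This establishes both directions of the biconditional at once.

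There is essentially no obstacle here: the argument is a one-line algebraic computation followed by the observation that the real and imaginary parts of a double number vanish independently. The only point requiring a little care is the use of $\jj^2 = 1$ (rather than $-1$ as in $\CC$), which is what makes the coefficient of the real part $E + G$ and forces the isothermal condition into the form $E = -G$ rather than $E = G$; this is consistent with the sign convention $E < 0 < G$ adopted for time-like surfaces. Since the displayed chain of equivalences preceding the proposition already carries out the computation, the proof amounts to invoking that chain together with the definition of isothermal coordinates.
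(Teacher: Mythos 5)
Your proposal is correct and follows essentially the same route as the paper: square $\Phi$ using $\jj^2=1$ to get $\Phi^2=(E+G)+2\jj F$, then equate real and imaginary parts to zero and identify the resulting conditions $E=-G$, $F=0$ with the definition of isothermal coordinates. The only cosmetic difference is that you make explicit the linear independence of $1$ and $\jj$ over $\RR$, which the paper leaves implicit in its displayed chain of equivalences.
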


In this paper we consider time-like surfaces, parametrized by isothermal coordinates, which means that:  
\begin{equation}\label{Phi2-tl} 
\Phi^2=0\,.
\end{equation}

 The norm of $\Phi$ satisfies the equalities:
\[
\|\Phi\|^2=\Phi\bar{\Phi}=\x_u^2-\x_v^2=E-G=2E=-2G\,.
\]
Consequently the coefficients of the first fundamental form are expressed through $\Phi$ as follows:
\begin{equation}\label{EG-tl}
E=-G=\frac{1}{2}\|\Phi\|^2\,;\quad F=0 \,.
\end{equation}
Then the first fundamental form can be written in the form:
\begin{equation}\label{Idt-tl}
\mathbf{I}=E\,(du^2 - dv^2)=\frac{1}{2}\|\Phi\|^2 (du^2 - dv^2)=\frac{1}{2}\|\Phi\|^2|dt|^2 .
\end{equation}
Under the condition $E<0$, it follows from the equality \eqref{EG-tl} that $\Phi$ satisfies the condition:
\begin{equation}\label{modPhi2-tl} 
\|\Phi\|^2 < 0\,.
\end{equation}

 Denote by $\Delta^h$ the hyperbolic Laplace operator in $\RR^2_1$, given by the equality::
\[
\Delta^h = \frac{\partial^2}{\partial u^2} - \frac{\partial^2}{\partial v^2}\,.
\]  
Differentiating equality \eqref{Phi_def-tl} and using that
$\frac{\partial}{\partial\bar t} \frac{\partial}{\partial t} = \frac{1}{4}\Delta^h$, we get:
\begin{equation}\label{dPhi_dbt-tl}
\frac{\partial\Phi}{\partial\bar t}=
\frac{\partial}{\partial\bar t}\, \left(2\, \frac{\partial \x}{\partial t} \right)=
\frac{1}{2}\Delta^h \x \,.
\end{equation}

 It follows from the above formula, that $\ds\frac{\partial\Phi}{\partial\bar t}$ is a real vector function,
which is equivalent to the following equality:
\begin{equation}\label{dPhi_dbt=dbPhi_dt-tl}
\frac{\partial\Phi}{\partial\bar t}=\frac{\partial\bar\Phi}{\partial t} \,.
\end{equation} 

 Thus, we established that any function $\Phi$, given by equality \eqref{Phi_def-tl}, has the properties
\eqref{Phi2-tl}, \eqref{modPhi2-tl} and \eqref{dPhi_dbt=dbPhi_dt-tl}. Conversely, these three properties are sufficient
for a $\DD^n_1$-valued function to be obtained as described. We have:
\begin{theorem}\label{x_Phi-thm-tl}

 Let the time-like surface $\M=(\D,\x)$ in $\RR^n_1$ be given in isothermal coordinates $(u,v)\in \D$, so that
$E<0$ and let $t=u+\jj v$. Then the function $\Phi$, defined by \eqref{Phi_def-tl}, satisfies the conditions:
\begin{equation}\label{Phi_cond-tl} 
\Phi^2=0\,; \qquad \|\Phi\|^2 < 0\,; \qquad \frac{\partial\Phi}{\partial\bar t}=\frac{\partial\bar\Phi}{\partial t} \,.
\end{equation}

 Conversely, let $\Phi(t):\ \D \to \DD^n_1$ be a vector function, defined in a domain $\D\subset\DD$,
satisfying the conditions \eqref{Phi_cond-tl}. Then around any point $t_0 \in \D$ there exists a subdomain
$\D_0\subset\D$ and a function $\x : \D_0 \to \RR^n_1$, such that $(\D_0,\x)$ is a regular time-like surface
in $\RR^n_1$, parametrized by isothermal coordinates $(u,v)$, given by $t=u+\ii v$. This surface satisfies the conditions 
$E<0$ and \eqref{Phi_def-tl}. The surface $(\D_0,\x)$ is determined by the function $\Phi$ through the equality
\eqref{Phi_def-tl} uniquely up to a translation in $\RR^n_1$.
\end{theorem}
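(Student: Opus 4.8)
The plan is to treat the two implications separately. For the direct statement there is essentially nothing new to do: the three relations in \eqref{Phi_cond-tl} are precisely \eqref{Phi2-tl}, \eqref{modPhi2-tl} and \eqref{dPhi_dbt=dbPhi_dt-tl}, which were derived just above directly from the definition \eqref{Phi_def-tl} together with the assumption that $(u,v)$ are isothermal with $E<0$. So I would simply collect those computations and cite them.

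The real content is the converse, and the idea is to recover $\x$ from $\Phi$ by integration and then read off the geometry from the algebraic conditions. First I would split $\Phi$ into its real and imaginary parts, $\Phi = g + \jj h$ with $g,h : \D \to \RR^n_1$; comparing with \eqref{Phi_def-tl} shows that a primitive surface must satisfy $\x_u = g$ and $\x_v = h$. I would then show that the third condition in \eqref{Phi_cond-tl} is exactly the integrability (closedness) condition for this overdetermined system. Concretely, using \eqref{delta-t-delta-uv} one computes $\partial\Phi/\partial\bar t = \tfrac12\big((g_u - h_v) + \jj(h_u - g_v)\big)$, and the requirement that this equal its own conjugate $\partial\bar\Phi/\partial t$ is equivalent to the single vector equation $h_u = g_v$, i.e. to $(\x_u)_v = (\x_v)_u$. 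This translation of the $\DD$-analytic condition into a real closedness condition is the conceptual crux, although the computation itself is short.

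With the integrability condition in hand, existence and uniqueness up to a constant are standard. On a simply connected subdomain $\D_0 \subset \D$ around $t_0$ the closed $\RR^n_1$-valued one-form $g\,du + h\,dv$ is exact by the Poincar\'e lemma, so there is $\x : \D_0 \to \RR^n_1$, unique up to an additive constant vector, with $d\x = g\,du + h\,dv$. This $\x$ satisfies $\x_u = g$, $\x_v = h$, hence $\x_u + \jj\x_v = \Phi$, which is exactly \eqref{Phi_def-tl}; the additive constant is precisely a translation in $\RR^n_1$, yielding the asserted uniqueness.

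It remains to check that $(\D_0,\x)$ is a regular time-like surface in isothermal coordinates with $E<0$, and this is where the first two conditions enter. By the Proposition above, $\Phi^2 = 0$ forces $F=0$ and $E=-G$, so $(u,v)$ are isothermal; and \eqref{EG-tl} together with $\|\Phi\|^2 < 0$ gives $E = \tfrac12\|\Phi\|^2 < 0$, whence $G = -E > 0$. The one step I expect to require genuine care is regularity: I would argue that $E<0<G$ forces $\x_u$ and $\x_v$ to be respectively a time-like and a space-like vector, both nonzero and mutually orthogonal (since $F=0$), hence linearly independent — a proportionality $\x_v = \lambda\x_u$ would give $G = \lambda^2 E$ with $G>0>E$, which is impossible for real $\lambda$. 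Thus $\x$ is an immersion and the induced metric on $T_p(\M)$ is indefinite, i.e. the surface is genuinely time-like. This nondegeneracy verification is the main obstacle; everything else is bookkeeping.
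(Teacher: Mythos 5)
Your proposal is correct and follows essentially the same route as the paper: the direct part cites the preceding computations, and the converse translates the condition $\frac{\partial\Phi}{\partial\bar t}=\frac{\partial\bar\Phi}{\partial t}$ into the closedness $\Re(\Phi)_v=\Im(\Phi)_u$ of the form $\Re(\Phi)\,du+\Im(\Phi)\,dv$, integrates it locally to get $\x$ unique up to a translation, and then reads off isothermality, $E<0$, and regularity from $\Phi^2=0$ and $\|\Phi\|^2<0$. Your explicit linear-independence argument for regularity is a detail the paper leaves implicit, but it is the same proof.
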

\begin{proof}
We have already seen that any function defined by \eqref{Phi_def-tl} satisfies \eqref{Phi_cond-tl}.
It remains to prove the inverse assertion. Suppose that the function $\Phi(t)$ satisfies \eqref{Phi_cond-tl}.
Taking into account the definition of $\ds\frac{\partial\Phi}{\partial\bar t}$ we get:
\[
2\ds\frac{\partial\Phi}{\partial\bar t}=
(\Re(\Phi)_u-\Im(\Phi)_v)+\jj (-\Re(\Phi)_v+\Im(\Phi)_u) \,.
\]
The third condition in \eqref{Phi_cond-tl} gives that $\Im 2\ds\frac{\partial\Phi}{\partial\bar t} = 0$\,,
which implies:\[ 
\Re(\Phi)_v=\Im(\Phi)_u \,.
\]
Therefore, it follows that for any $t_0\in\D$ there exists a neighborhood $\D_0\subset\D$ of $t_0$ and a function 
$\x : \D_0 \to \RR^n_1$, such that:
\[ 
\x_u=\Re(\Phi)\,; \quad \x_v=\Im(\Phi)\,.
\]
The last equalities are equivalent to \eqref{Phi_def-tl}. The first two conditions in \eqref{Phi_cond-tl} give
$\x_u\cdot\x_v=0$ and $\x_u^2=-\x_v^2 < 0$. This means that $(\D_0,\x)$ is a regular time-like surface in $\RR^n_1$,
parametrized by isothermal coordinates $(u,v)$, such that $E<0$. 
Note that the derivatives $\x_u$ and $\x_v$ are determined uniquely from the equality \eqref{Phi_def-tl}. Consequently,
the function $\x(u,v)$ is determined  uniquely up to an additive constant, which proves the assertion. 
\end{proof}

 Finally we shall obtain the transformation formulas for $\Phi$ under a change of the isothermal coordinates and
under a motion of the surface $\M=(\D,\x)$ in $\RR^n_1$. Consider a change of the isothermal coordinates, which 
in complex form is given by the equality: $t=t(s)$. Since the change of the isothermal coordinates is a conformal 
map in $\DD$, then the function $t(s)$ is either holomorphic, or anti-holomorphic. Denote by $\tilde\Phi(s)$ 
the function, corresponding to the new coordinates $s$. 

First we consider the holomorphic case.  From the definition \eqref{Phi_def-tl} of $\Phi$ we have:
\begin{equation*}
\tilde\Phi(s)=2\frac{\partial\x}{\partial s}=
2\frac{\partial\x}{\partial t}\frac{\partial t}{\partial s}+
2\frac{\partial\x}{\partial \bar t}\frac{\partial \bar t}{\partial s}=
2\frac{\partial\x}{\partial t}\frac{\partial t}{\partial s}+ 2\frac{\partial\x}{\partial \bar t} 0=
2\frac{\partial\x}{\partial t}\frac{\partial t}{\partial s} \,.
\end{equation*}
Therefore, under a holomorphic change of the coordinates $t=t(s)$ we have:
\begin{equation}\label{Phi_s-hol-tl}
\tilde\Phi(s)=\Phi(t(s)) \frac{\partial t}{\partial s} \,.
\end{equation}
In a similar way, in the anti-holomorphic case, we get:
\begin{equation}\label{Phi_s-antihol-tl}
\tilde\Phi(s)=\bar\Phi(t(s)) \frac{\partial \bar t}{\partial s} \,.
\end{equation}
Especially, under the change $t=\bar s$, the function $\Phi$ is transformed as follows:
\begin{equation}\label{Phi_s-t_bs-tl}
\tilde\Phi(s)=\bar\Phi(\bar s) \,.
\end{equation}

 Using the above formulas for $\tilde\Phi$ we find the coefficient $\tilde E$ of the first fundamental form.
Thus, under a holomorphic change of the coordinates $t=t(s)$, it follows from \eqref{EG-tl} and 
\eqref{Phi_s-hol-tl} that:
\begin{equation}\label{E_s-hol-tl}
\tilde E(s)=E(t(s)) |t'(s)|^2 .
\end{equation}
We use such isothermal coordinates that $E(t)<0$ and $\tilde E(s)<0$. Then it follows from the above equality,
that the admissible changes satisfy the condition $|t'(s)|^2>0$\,.
 
 Respectively, under the change $t=\bar s$, we get from \eqref{EG-tl} and \eqref{Phi_s-t_bs-tl} the equality
\begin{equation}\label{E_s-t_bs-tl}
\tilde E(s)=E(\bar s) \,.
\end{equation}

 Taking into account the Cauchy-Riemann conditions \eqref{CR-DD}, it follows that the Jacobian of a holomorphic 
change of the type $t=t(s)$ is equal to $|t'(s)|^2$. Therefore the inequalities $E(t)<0$ and $\tilde E(s)<0$ give
that the orientation of the surface is preserved under a holomorphic change. The orientation of the surface is 
converted under the change $t=\bar s$ and therefore the orientation of the surface is converted under any 
anti-holomorphic change satisfying the condition $\tilde E(s)<0$. Thus, we have:
\begin{prop}\label{Orient-holo-tl}
Let $\M$ be a time-like surface in $\RR^n_1$ and let $t$ and $s$ be isothermal coordinates on $\M$, such
that the corresponding coefficients of the first fundamental form satisfy the conditions $E(t)<0$ and 
$\tilde E(s)<0$\,. Then $t$ and $s$ generate one and the same orientation of $\M$, if and only if 
the change $t=t(s)$ is holomorphic; $t$ and $s$ generate different orientations of $\M$, if and only if 
the change $t=t(s)$ is anti-holomorphic.
\end{prop}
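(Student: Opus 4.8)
The plan is to reduce the statement to the two facts assembled in the discussion preceding it, organised around the holomorphic/anti-holomorphic dichotomy and the sign of the Jacobian of the transition map.

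First I would record the dichotomy. Since $t$ and $s$ are both isothermal for the induced indefinite metric, the transition map $t=t(s)$ is conformal with respect to the flat metric of $\RR^2_1$; by the remark following the Cauchy--Riemann conditions \eqref{CR-DD}, such a map is either holomorphic or anti-holomorphic in $\DD$. These two possibilities are mutually exclusive for a genuine coordinate change, since a map that were simultaneously holomorphic and anti-holomorphic would satisfy $\partial t/\partial s=\partial t/\partial\bar s=0$ and hence be constant. Thus every admissible change falls into exactly one class, and it remains only to pair each class with the correct orientation behaviour; the two biconditionals then follow formally, the converse directions being contrapositives read off from the complementary case.

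Next I would treat the holomorphic case. Writing $t=u+\jj v$ as a function of $s=p+\jj q$, the holomorphy conditions become $u_p=v_q$ and $v_p=u_q$, and substituting these into the Jacobian $u_pv_q-u_qv_p$ yields $u_p^2-v_p^2=|t'(s)|^2$. From \eqref{E_s-hol-tl} we have $\tilde E(s)=E(t(s))\,|t'(s)|^2$, so the hypotheses $E(t)<0$ and $\tilde E(s)<0$ force $|t'(s)|^2>0$; a positive Jacobian means the change preserves orientation, which settles the forward implication of the first biconditional.

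Finally I would handle the anti-holomorphic case by factoring the change through the conjugation $w=\bar s$. If $t=t(s)$ is anti-holomorphic then $t$ depends on $\bar s=w$ alone, so $t$ is a holomorphic function of $w$, whereas $w=\bar s$ has Jacobian $-1$ and therefore reverses orientation. The one point requiring care — and the only real obstacle — is to verify that the intermediate $w$-coordinates again satisfy the sign normalisation, so that the holomorphic case may legitimately be invoked for the step $w\mapsto t$; this is exactly what \eqref{E_s-t_bs-tl} (with \eqref{Phi_s-t_bs-tl}) supplies, giving $\hat E(w)=\tilde E(\bar w)<0$ for the coefficient $\hat E$ of the $w$-coordinates. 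Composing the orientation-reversing conjugation with the orientation-preserving holomorphic change then produces an orientation-reversing map, so every anti-holomorphic change reverses orientation. Together with the dichotomy and the mutual exclusivity from the first step, this establishes both biconditionals.
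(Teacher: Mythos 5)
Your proposal is correct and follows essentially the same route as the paper: the dichotomy holomorphic/anti-holomorphic for conformal changes, the Jacobian of a holomorphic change equal to $|t'(s)|^2$ forced positive by $E(t)<0$ and $\tilde E(s)<0$ via \eqref{E_s-hol-tl}, and the reduction of the anti-holomorphic case to the orientation-reversing conjugation $t=\bar s$. Your explicit checks (mutual exclusivity of the two cases, and that the intermediate coordinates $w=\bar s$ still satisfy $\hat E(w)<0$ by \eqref{E_s-t_bs-tl}) only make explicit what the paper leaves implicit.
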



Now, consider two time-like surfaces $\M=(\D,\x)$ and $\hat\M=(\D,\hat\x)$ in $\RR^n_1$, parametrized by
isothermal coordinates $t=u+\jj v$ in one and the same domain $\D \subset \DD$.
Suppose that $\hat\M$ is obtained from $\M$ through a motion (possibly improper)
in $\RR^n_1$ given by the equality:
\begin{equation}\label{hat_M-M-mov-tl}
\hat\x(t)=A\x(t)+\vb\,; \qquad A \in \mathbf{O}(\RR^n_1), \ \vb \in \RR^n_1 \,.
\end{equation}
We obtain from here the relation between the corresponding functions $\Phi$ and $\hat\Phi$:
\begin{equation}\label{hat_Phi-Phi-mov-tl}
\hat\Phi(t)=A\Phi(t)\,; \qquad A \in \mathbf{O}(\RR^n_1) \,.
\end{equation}
Conversely, if $\Phi$ and $\hat\Phi$ are related by \eqref{hat_Phi-Phi-mov-tl}, then it follows that
$\hat\x_u=A\x_u$ and $\hat\x_v=A\x_v$, which implies \eqref{hat_M-M-mov-tl}. Therefore, the relations 
\eqref{hat_M-M-mov-tl} and \eqref{hat_Phi-Phi-mov-tl} are equivalent.


\section{Characterization of minimal time-like surfaces in $\RR^n_1$ via the function $\Phi$ and its primitive 
function $\Psi$.}\label{sect_Phi_Psi-tl}

Let $\M=(\D,\x)$ be a time-like surface in $\RR^n_1$ given in isothermal coordinates $t=u+\jj v$,
and $\Phi$ be the function, defined by \eqref{Phi_def-tl}. First we shall express the condition for the surface $\M$ 
to be minimal through the function $\Phi$. For this purpose, let us consider the orthonormal basis $(\vX_1,\vX_2)$ 
of $T_p(\M)$, where $\vX_1$ and $\vX_2$ are the unit tangent vectors oriented as the coordinate vectors $\x_u$ 
and $\x_v$, respectively:
\begin{equation}\label{vX1_vX2-def-tl}
\vX_1=\frac{\x_u}{\sqrt {-E}}\,; \quad \vX_2=\frac{\x_v}{\sqrt G}=\frac{\x_v}{\sqrt {-E}}\,.
\end{equation}

In view of \eqref{Phi_def-tl} the coordinate vectors $\x_u$ and $\x_v$ are expressed by $\Phi$ as follows:
\begin{equation}\label{xuxv-tl}
\begin{array}{ll}
\x_u= \Re (\Phi)=\ds\frac{1}{2}(\Phi+\bar\Phi)\,,\\[2ex]
\x_v= \Im (\Phi)=\ds\frac{1}{2\jj}(\Phi-\bar\Phi)=\ds\frac{\jj}{2}(\Phi-\bar\Phi)\,.
\end{array}
\end{equation}

 Differentiating equality \eqref{Phi2-tl}, we get:
\begin{equation}\label{Phi.dPhi_dbt-tl} 
\Phi\cdot\frac{\partial\Phi}{\partial\bar t}=0 \,.
\end{equation}
According to \eqref{dPhi_dbt-tl} the function $\ds\frac{\partial\Phi}{\partial\bar t}$
is real. Then, applying a complex conjugation in \eqref{Phi.dPhi_dbt-tl}, we have:
\begin{equation}\label{bPhi.dPhi_dbt-tl} 
\bar\Phi\cdot\frac{\partial\Phi}{\partial\bar t}=0 \,.
\end{equation}
Equalities \eqref{xuxv-tl} show that $\Phi$ and $\bar\Phi$ form a basis of $T_{p,\DD}(M)$ at any point 
$p\in\M$. Then, it follows from \eqref{Phi.dPhi_dbt-tl} and \eqref{bPhi.dPhi_dbt-tl} that 
$\ds\frac{\partial\Phi}{\partial\bar t}$ is a vector, orthogonal to $T_p(\M)$ and consequently
\begin{equation}\label{dPhi_dbt_inN-tl} 
\frac{\partial\Phi}{\partial\bar t} \in N_p(\M) \,.
\end{equation}
The last condition and \eqref{dPhi_dbt-tl} imply that:
\begin{equation*}
\begin{array}{rl}\ds
\frac{\partial\Phi}{\partial\bar t}\!\! &=
\ds\left(\frac{\partial\Phi}{\partial\bar t}\right)^\bot=
\frac{1}{2}(\Delta^h \x )^\bot=
\frac{1}{2}(\x_{uu}-\x_{vv} )^\bot=
\frac{1}{2}(\nabla_{\x_u} \x_u - \nabla_{\x_v} \x_v )^\bot\\[2.5ex]
&=\ds\frac{1}{2}(\sigma(\x_u,\x_u) - \sigma(\x_v,\x_v) )=
E\; \frac{1}{2}(-\sigma(\vX_1,\vX_1) + \sigma(\vX_2,\vX_2) ) = E\vH \,.
\end{array}
\end{equation*}
Finally we have:
\begin{equation}\label{dPhi_dbt-Delta_x-EH-tl}
\frac{\partial\Phi}{\partial\bar t}=\frac{1}{2}\Delta^h \x = E\vH \,.
\end{equation}

The last equalities imply the following statement:
\begin{theorem}\label{Min_x_Phi-thm-tl}
Let $\M=(\D,\x)$ be a time-like surface in $\RR^n_1$, given in isothermal coordinates $(u,v)\in \D$,
and $\Phi(t)$ be the vector function, defined in $\D$, given by \eqref{Phi_def-tl}.
Then the following three conditions are equivalent:
\begin{enumerate}
	\item The function $\Phi(t)$ is holomorphic: \ $\ds\frac{\partial\Phi}{\partial\bar t}= 0$\,.
	\item The function $\x (u,v)$ is hyperbolically harmonic: \ $\Delta^h \x = 0$\,.
	\item $\M=(\D,\x)$ is a minimal time-like surface in $\RR^n_1$: \ $\vH=0$\,.  
\end{enumerate}
\end{theorem}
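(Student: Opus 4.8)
The plan is to read off all three equivalences directly from the master identity \eqref{dPhi_dbt-Delta_x-EH-tl}, which has already been established just above the theorem:
\[
\frac{\partial\Phi}{\partial\bar t}=\frac{1}{2}\Delta^h \x = E\vH\,.
\]
This single chain of equalities couples the three quantities whose vanishing defines conditions (1), (2) and (3), so the theorem reduces to elementary observations about when each of them vanishes.

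First I would dispatch the equivalence of (1) and (2). The leftmost equality $\frac{\partial\Phi}{\partial\bar t}=\frac{1}{2}\Delta^h \x$ shows immediately that $\frac{\partial\Phi}{\partial\bar t}=0$ if and only if $\frac{1}{2}\Delta^h\x=0$, i.e. $\Delta^h\x=0$. No computation is needed here; the two conditions are literally the same condition written in two notations, since $\Phi=2\,\partial\x/\partial t$ and $\partial/\partial\bar t\,\partial/\partial t=\tfrac14\Delta^h$.

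Next I would treat the equivalence of (2) and (3) using the rightmost equality $\frac{1}{2}\Delta^h\x=E\vH$, equivalently $\Delta^h\x=2E\vH$. Since the enumeration of the isothermal coordinates is chosen so that $E<0$, we have $E\neq 0$, whence $\Delta^h\x=0$ holds if and only if $\vH=0$. Chaining the two equivalences yields (1) $\Leftrightarrow$ (2) $\Leftrightarrow$ (3), which is the assertion.

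There is essentially no obstacle remaining at the level of the theorem itself: all the substantive work has already been carried out in deriving \eqref{dPhi_dbt-Delta_x-EH-tl}, namely showing that $\frac{\partial\Phi}{\partial\bar t}$ is a real, normal-valued vector (using that $\Phi,\bar\Phi$ span $T_{p,\DD}(\M)$ together with the orthogonality relations \eqref{Phi.dPhi_dbt-tl} and \eqref{bPhi.dPhi_dbt-tl}) and identifying $\frac{1}{2}(\Delta^h\x)^\bot$ with $E\vH$ through the Gauss formula and the definition \eqref{H-def-tl} of the mean curvature. The only point that deserves a word of care is the nonvanishing of the scalar factor $E$, which is exactly what permits cancelling it when passing between (2) and (3); this is guaranteed by the standing assumption $E<0$ on the parametrization.
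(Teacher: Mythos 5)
Your proposal is correct and follows exactly the paper's own route: the paper derives the identity \eqref{dPhi_dbt-Delta_x-EH-tl}, namely $\frac{\partial\Phi}{\partial\bar t}=\frac{1}{2}\Delta^h\x=E\vH$, immediately before the theorem and then presents the theorem as a direct consequence, just as you do. Your added remark that the cancellation of $E$ is licensed by the standing assumption $E<0$ is the one small point the paper leaves implicit, and it is handled correctly.
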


\smallskip
Equality \eqref{Phi_def-tl} gives further:
\begin{equation}\label{dPhi_dt_bot-tl}
\frac{\partial\Phi}{\partial t}=
\frac{\x_{uu} + \x_{vv}}{2} + \jj \x_{uv}\,; \quad 
\left(\frac{\partial\Phi}{\partial t}\right)^\bot\!\!=
\frac{\sigma (\x_u,\x_u)+\sigma (\x_v,\x_v)}{2} + \jj \sigma (\x_u,\x_v)\,.
\end{equation}

 In the case of a minimal surface, according to the above theorem, $\Phi$ is holomorphic function
and therefore $\ds\frac{\partial\Phi}{\partial\bar t}=0$. As usual, we shall use for $\ds\frac{\partial\Phi}{\partial t}$ 
the shorter denotation $\Phi'$.

\smallskip
The condition for $\M$ to be minimal gives that: 
\begin{equation}\label{sigma_22-tl}
\sigma(\vX_2,\vX_2)=\sigma(\vX_1,\vX_1)\,; \qquad  \sigma(\x_v,\x_v)=\sigma(\x_u,\x_u)\,.
\end{equation}

 The next formulas give the function $\Phi'$ and its orthogonal projection into $N_{p,\DD}(\M)$:
\begin{equation}\label{PhiPr-tl}
\Phi^\prime=\frac{\partial\Phi}{\partial u}=\x_{uu}+\jj\x_{uv}\,; \quad
\Phi^{\prime \bot}=\x_{uu}^\bot +\jj \x_{uv}^\bot =\sigma (\x_u,\x_u)+\jj\sigma (\x_u,\x_v)\,.
\end{equation}

 Then $\sigma (\x_u,\x_u)$, $\sigma (\x_v,\x_v)$ and $\sigma (\x_u,\x_v)$ can be expressed through the function $\Phi$
as follows:
\begin{equation}\label{sigma_uu_uv-tl}
\begin{array}{l}
\sigma(\x_u,\x_u) =  \Re (\Phi^{\prime \bot}) = \,\ds\frac{1}{2}\,(\Phi^{\prime \bot}+\overline{\Phi^{\prime \bot}})=
\ds\frac{1}{2}(\Phi^{\prime \bot}+{\overline{\Phi^\prime}}^\bot)\,,\\[4mm]

\sigma(\x_v,\x_v) =  \Re (\Phi^{\prime \bot}) = \,\ds\frac{1}{2}\,(\Phi^{\prime \bot}+\overline{\Phi^{\prime \bot}})=
\ds\frac{1}{2}(\Phi^{\prime \bot}+{\overline{\Phi^\prime}}^\bot)\,,\\[4mm]

\sigma(\x_u,\x_v) = \Im (\Phi^{\prime \bot})=
\ds\frac{1}{2\jj}(\Phi^{\prime \bot}-\overline{\Phi^{\prime \bot}})=
\ds\frac{\jj}{2}(\Phi^{\prime \bot}-{\overline{\Phi^\prime}}^\bot)\,.
\end{array}
\end{equation}

\medskip

 If the time-like surface $\M=(\D,\x)$ is minimal, then according Theorem \ref{Min_x_Phi-thm-tl}\, 
the function $\x$ is hyperbolically harmonic. Then we can introduce locally the harmonic conjugate
function $\y$ of $\x$ through the Cauchy-Riemann conditions: $\y_u=\x_v$ and $\y_v=\x_u$. 
Denote by $\Psi$ the $\DD^n_1$-valued vector function given by the equality:
\begin{equation}\label{Psi-def-tl}
\Psi=\x+\jj\y \,.
\end{equation}
Since the function $\Psi$ is holomorphic, then we obtain formulas for $\x$ and $\Phi$ through the function $\Psi$:
\begin{equation}\label{x_Phi_Psi-tl}
\x=\Re\Psi \,; \qquad \Phi=\x_u+\jj\x_v=\x_u+\jj\y_u=\frac{\partial\Psi}{\partial u}=\Psi' .
\end{equation}
 Now we shall express the conditions for $\M$ to be minimal through the function $\Psi$:

\begin{theorem}\label{Min_x_Psi-thm-tl}
Let $\M=(\D,\x)$ be a minimal time-like surface in $\RR^n_1$, given in isothermal coordinates $(u,v)\in \D$.
Then\, $\x$ is represented locally in the form:
\begin{equation}\label{x_Psi-tl}
\x(u,v)=\Re\Psi(t) \,,
\end{equation}
where $\Psi$ is a holomorphic $\DD^n_1$-valued function of $t=u+\jj v$, satisfying the conditions:
\begin{equation}\label{Psi_cond-tl}
\Psi'^{\,2}=0\,; \qquad \|\Psi'\|^2 < 0\,.
\end{equation}

 Conversely, if $\Psi$ is a holomorphic $\DD^n_1$-valued function, defined in a domain $\D\subset \DD$, 
satisfying \eqref{Psi_cond-tl}, then the pair $(\D,\x)$, where $\x$ is given by \eqref{x_Psi-tl}, is a minimal 
time-like surface in $\RR^n_1$ parametrized by isothermal coordinates $(u,v)$.
\end{theorem}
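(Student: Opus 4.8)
The plan is to prove the two directions separately, in each case moving between $\x$, $\Psi$ and $\Phi$ via the identity $\Phi=\Psi'$ and then invoking the characterizations already established in this section, so that every claim reduces to a property of $\Phi$.

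For the direct statement I would start from minimality and apply Theorem \ref{Min_x_Phi-thm-tl} to get that $\x$ is hyperbolically harmonic, $\Delta^h\x=0$. The one genuinely local step is the construction of the harmonic conjugate $\y$: I seek $\y:\D_0\to\DD^n_1$ with $\y_u=\x_v$, $\y_v=\x_u$. The integrability condition $\y_{uv}=\y_{vu}$ reads $\x_{vv}=\x_{uu}$, i.e. $\Delta^h\x=0$, which is precisely what minimality supplies; hence on a simply connected neighborhood of any point the $1$-form $\x_v\,du+\x_u\,dv$ is closed and therefore exact, so $\y$ exists (and is unique up to an additive constant). Putting $\Psi=\x+\jj\y$, the relations $\y_u=\x_v$, $\y_v=\x_u$ are exactly the Cauchy--Riemann equations \eqref{CR-DD}, so $\Psi$ is holomorphic, $\x=\Re\Psi$, and $\Psi'=\x_u+\jj\x_v=\Phi$ as in \eqref{x_Phi_Psi-tl}. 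The two conditions \eqref{Psi_cond-tl} then follow at once, since $\Psi'^{\,2}=\Phi^2=0$ is the isothermal condition \eqref{Phi2-tl} and $\|\Psi'\|^2=\|\Phi\|^2<0$ is \eqref{modPhi2-tl}.

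For the converse, given a holomorphic $\Psi$ with $\Psi'^{\,2}=0$ and $\|\Psi'\|^2<0$, I would set $\x=\Re\Psi$ and $\Phi=\Psi'$, and first check that $\Phi$ is the function attached to $\x$ by \eqref{Phi_def-tl}. Writing $\Psi=\x+\jj\y$ with $\y=\Im\Psi$, holomorphicity of $\Psi$ gives the Cauchy--Riemann relations $\y_u=\x_v$, $\y_v=\x_u$, whence $\Phi=\Psi'=\partial\Psi/\partial u=\x_u+\jj\x_v$, as required. Since $\partial/\partial t$ and $\partial/\partial\bar t$ commute, $\Phi=\Psi'$ is again holomorphic, so $\partial\Phi/\partial\bar t=0$ and the reality condition $\partial\Phi/\partial\bar t=\partial\bar\Phi/\partial t$ holds trivially. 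Thus $\Phi$ satisfies all of \eqref{Phi_cond-tl}, and the converse part of Theorem \ref{x_Phi-thm-tl} guarantees that $(\D,\x)$ is a regular time-like surface parametrized by isothermal coordinates with $E<0$ (the surface it furnishes agreeing with $\x=\Re\Psi$, because $\Phi=\x_u+\jj\x_v$). Finally, holomorphicity of $\Phi$ yields minimality through Theorem \ref{Min_x_Phi-thm-tl}.

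I do not anticipate a serious obstacle: once the dictionary $\x\leftrightarrow\Psi\leftrightarrow\Phi=\Psi'$ is in place, each assertion is an already-proved property of $\Phi$. The only point requiring care is the local existence of the harmonic conjugate $\y$ in the direct statement, which is exactly why the representation $\x=\Re\Psi$ is asserted only locally; passing to a simply connected neighborhood so that the closed form $\x_v\,du+\x_u\,dv$ becomes exact resolves it.
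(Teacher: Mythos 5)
Your proposal is correct and follows essentially the same route as the paper: the direct part uses the harmonic conjugate $\y$ and $\Psi=\x+\jj\y$ with $\Psi'=\Phi$, and the converse reduces to Theorems \ref{x_Phi-thm-tl} and \ref{Min_x_Phi-thm-tl} exactly as in the paper's proof. The only difference is that you spell out the Poincar\'e-lemma justification for the local existence of $\y$ (closedness of $\x_v\,du+\x_u\,dv$ from $\Delta^h\x=0$), which the paper simply asserts.
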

\begin{proof}
If $\M=(\D,\x)$ is a minimal time-like surface in $\RR^n_1$, then the function $\Psi$, defined by \eqref{Psi-def-tl},
satisfies \eqref{x_Phi_Psi-tl}. The properties \eqref{Psi_cond-tl} are equivalent to the corresponding
properties \eqref{Phi_cond-tl} of $\Phi$.

 Conversely, if $\Psi$ is a holomorphic $\DD^n_1$-valued function with properties \eqref{Psi_cond-tl}, 
then putting $\x=\Re\Psi$ and $\Phi=\Psi'$ we obtain that $\Phi=\x_u+\jj\x_v$ and therefore Theorem
\ref{x_Phi-thm-tl} is applicable. Hence, $(\D,\x)$ is a regular time-like surface in $\RR^n_1$ in isothermal
coordinates $(u,v)$. Since $\x$ is hyperbolically harmonic, then $(\D,\x)$ is a minimal surface. 
\end{proof}

 Next we consider the question how the function $\Psi$ is transformed under a change of the isothermal coordinates
and under a motion of the surface $\M$ in $\RR^n_1$.

Under a holomorphic change of the coordinates $t=t(s)$ we have $\x(t(s))=\Re\Psi(t(s))$. Since the function
$\Psi(t(s))$ is holomorphic, then in this case $\Psi(t)$  is transformed into $\Psi(t(s))$. 
Any anti-holomorphic change can be reduced to a holomorphic change and the special change $t=\bar s$. 
Under the last change we have $\x(\bar s)=\Re\bar\Psi(\bar s)$ and therefore in this case $\Psi (t)$ is 
transformed into $\bar\Psi(\bar s)$. 
 
For the next question, let $\M=(\D,\x)$ and $\hat\M=(\D,\hat\x)$ be two minimal time-like surfaces in $\RR^n_1$, 
given in isothermal coordinates. These surfaces are related by a motion (possibly improper) in $\RR^n_1$, given by the
formula $\hat\x(t)=A\x(t)+\vb$, where $A \in \mathbf{O}(\RR^n_1)$ and $\vb \in \RR^n_1$, if and only if
the corresponding functions $\Psi$ and $\hat\Psi$ satisfy the equality $\hat\Psi (t)=A\Psi (t)+\vb$.

\medskip

 Theorem \ref{Min_x_Psi-thm-tl} shows that from a given minimal time-like surface $\M=(\D,\x)$ we can obtain 
(at least locally) other minimal time-like surfaces using different modifications of the function $\Psi$. 
For example, if $k>0$ is a constant, then the function $k\Psi$ also satisfies the conditions \eqref{Psi_cond-tl}
and therefore we can apply Theorem \ref{Min_x_Psi-thm-tl} to $(\D,\hat\x=\Re( k\Psi))$. In this way, we obtain
a new minimal time-like surface $\hat\M$ in $\RR^n_1$, which satisfies the equality: $\hat\x=k\x$.   
This means that the surface $\hat\M$ is obtained from the surface $\M$ through a homothety with coefficient $k$.
Thus we have:

\begin{prop}\label{Min_Surf_Hom-tl}
Let $\M$ be a minimal time-like surface in $\RR^n_1$, given in isothermal coordinates $(u,v)$, such that $E<0$.
If the surface $\hat\M$ is obtained from $\M$ through a homothety with coefficient $k$, then $\hat\M$ is also
a minimal time-like surface in isothermal coordinates $(u,v)$. The corresponding functions $\hat\Phi$ and 
$\hat E$ of $\hat\M$ are obtained by:
\begin{equation}\label{hat_Phi-Phi-hmt-tl}
\hat\Phi(t)=k\Phi(t)\,; \qquad  \hat E(t) = k^2 E(t)\,.
\end{equation}
\end{prop}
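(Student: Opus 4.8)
The plan is to exploit the representation of minimal time-like surfaces via the holomorphic function $\Psi$ established in Theorem \ref{Min_x_Psi-thm-tl}, together with the elementary fact that a homothety acts linearly on the position vector. First I would fix the meaning of the hypothesis: a homothety with coefficient $k>0$ is the relation $\hat\x=k\x$. Starting from the minimal surface $\M=(\D,\x)$ with $E<0$, Theorem \ref{Min_x_Psi-thm-tl} gives a holomorphic function $\Psi$ with $\x=\Re\Psi$ and $\Psi'^{\,2}=0$, $\|\Psi'\|^2<0$.

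The core step is to set $\hat\Psi=k\Psi$ and check that it again satisfies the hypotheses of the converse part of Theorem \ref{Min_x_Psi-thm-tl}. Since $k$ is a real constant, $\hat\Psi$ is holomorphic, and from $\hat\Psi'=k\Psi'$ one gets $\hat\Psi'^{\,2}=k^2\Psi'^{\,2}=0$ and $\|\hat\Psi'\|^2=k^2\|\Psi'\|^2$. Here lies the only point requiring the slightest care: because $\|\Psi'\|^2<0$ and $k^2>0$, the product $k^2\|\Psi'\|^2$ is still strictly negative, so the condition $\|\hat\Psi'\|^2<0$ in \eqref{Psi_cond-tl} is preserved; the hypothesis $k>0$ (so $k\neq0$) is exactly what guarantees regularity is not lost. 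Applying the converse of Theorem \ref{Min_x_Psi-thm-tl} to $\hat\Psi$ then yields that $(\D,\hat\x)$ with $\hat\x=\Re\hat\Psi=\Re(k\Psi)=k\Re\Psi=k\x$ is a minimal time-like surface in the same isothermal coordinates $(u,v)$, which also confirms that $\hat\M$ is indeed the homothetic image of $\M$ with coefficient $k$.

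It remains to read off the stated transformation formulas. From $\hat\x=k\x$ we have $\hat\x_u=k\x_u$ and $\hat\x_v=k\x_v$, so by the definition \eqref{Phi_def-tl} of the associated function,
\[
\hat\Phi=\hat\x_u+\jj\hat\x_v=k(\x_u+\jj\x_v)=k\Phi\,,
\]
which is the first equality in \eqref{hat_Phi-Phi-hmt-tl}. For the second, I would substitute into the expression \eqref{EG-tl} for the coefficient of the first fundamental form:
\[
\hat E=\tfrac{1}{2}\|\hat\Phi\|^2=\tfrac{1}{2}\|k\Phi\|^2=k^2\cdot\tfrac{1}{2}\|\Phi\|^2=k^2E\,,
\]
establishing $\hat E=k^2E$. (Alternatively one could proceed entirely without $\Psi$: verify directly that $\hat E=-\hat G<0$ and $\hat F=0$ from $\hat\x=k\x$, and then invoke Theorem \ref{Min_x_Phi-thm-tl} via $\Delta^h\hat\x=k\,\Delta^h\x=0$ to conclude minimality; this route makes the sign bookkeeping even more transparent.) There is no substantial obstacle here — the statement is essentially the homogeneity of the constructions of the preceding sections under scaling — and the only step meriting explicit attention is the sign preservation $k^2\|\Psi'\|^2<0$ that keeps the surface time-like and regular.
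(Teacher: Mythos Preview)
Your argument is correct and follows essentially the same route as the paper: apply Theorem \ref{Min_x_Psi-thm-tl} to $k\Psi$ to see that $\hat\M$ is minimal time-like in the same isothermal coordinates, then read off $\hat\Phi=(k\Psi)'=k\Phi$ and use \eqref{EG-tl} for $\hat E=k^2E$. The paper's proof is terser (it relegates the verification of \eqref{Psi_cond-tl} for $k\Psi$ to the discussion immediately preceding the proposition), but the content is identical.
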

\begin{proof}
It follows from the notes before the above statement that the notions of a minimal time-like surface and 
isothermal coordinates are invariant under a homothety in $\RR^n_1$. Further, the first equality in 
\eqref{hat_Phi-Phi-hmt-tl} follows from $\hat\Phi=(k\Psi)'=k\Phi$. The second equality follows from the first one 
and \eqref{EG-tl}. 
\end{proof}

 Another way to obtain a new minimal time-like surface from a given minimal time-like surface $\M$ is to find the 
\emph{conjugate} surface of $\M$. This surface is obtained taking the harmonic conjugate function $\y$ of $\x$, 
introduced above. The equality $\x=\Re\Psi$ gives $\y=\Re(\jj\Psi)$. The function $\jj\Psi$ satisfies the
conditions $(\jj\Psi')^{2}=0$ and $\|\jj\Psi'\|^2 = -\|\Psi'\|^2 > 0$. In order to apply Theorem
\ref{Min_x_Psi-thm-tl} to $\y$ and $\jj\Psi$ we have to make a change  of the isothermal coordinates of the type
$t=\jj s$. Then the function $\hat\Psi(s)=\jj\Psi(\jj s)$ satisfies the conditions \eqref{Psi_cond-tl}.
It follows from Theorem \ref{Min_x_Psi-thm-tl} that $\y(\jj s)=\Re(\jj\Psi(\jj s))$ determines a minimal time-like 
surface in $\RR^n_1$. Since our considerations are local, we can suppose that $\y$ is defined in the whole domain
$\jj\D$.

We give the following: 

\begin{dfn}\label{Conj_Min_Surf-tl} 
Let $\M=(\D,\x(t))$ be a minimal time-like surface in $\RR^n_1$, given in isothermal coordinates $t\in\D$.
The surface $\bar\M=(\jj\D,\y(\jj s))$, where $\y$ is a function (hyperbolically) harmonic conjugate to $\x$, 
is said to be \textbf{conjugate} to the given surface $\M=(\D,\x(t))$.
\end{dfn}
Taking into account the transformation properties of the function $\Psi$, it follows that the function $\y$
is invariant under a holomorphic change of the isothermal coordinates, while under an anti-holomorphic change 
of the isothermal coordinates the function $\y$ is replaced by $-\y$. Furthermore, the harmonic conjugate
function of a given one is determined uniquely up to an additive constant. Geometrically this means that
the surface, conjugate to a given one, is defined locally and is determined uniquely up to a motion
(possibly improper in the case of an odd dimension $n$) in $\RR^n_1$.
If $\hat\Phi(s)$ and $\hat E(s)$ are the corresponding functions of $\bar\M$, then the definition and 
\eqref{EG-tl} imply that:
\begin{equation}\label{Phi_conj-tl}
\hat\Phi(s) = (\jj\Psi(\jj s))' = \Phi(\jj s)\,; \qquad \|\hat\Phi(s)\|^2=\|\Phi(\jj s)\|^2\,; \qquad 
\hat E(s) = E(\jj s)\,.
\end{equation}

\medskip

 In the above construction of a conjugate minimal time-like surface, let us replace $\jj$ in $\jj\Psi$ with 
an arbitrary double number of the type $\e^{\jj\theta}$, $\theta \in \RR$. Then the function $\e^{\jj\theta}\Psi$ 
satisfies the conditions \eqref{Psi_cond-tl}. Therefore, we obtain a one-parameter family of minimal time-like surfaces  
by the formula: 
\begin{equation}\label{1-param_family-tl}
\x_\theta = \Re \e^{\jj\theta}\Psi = \x\cosh\theta + \y\sinh\theta \,.
\end{equation}
This leads to the following
\begin{dfn}\label{1-param_family_assoc_surf-tl} 
Let $\M=(\D,\x)$ be a minimal time-like surface in $\RR^n_1$, given in isothermal coordinates $(u,v)\in\D$.
The family of surfaces $\M_\theta=(\D,\x_\theta)$, $\theta \in \RR$, where $\x_\theta$ is given by 
\eqref{1-param_family-tl}, is said to be the one-parameter family of minimal time-like surfaces 
\textbf{associated} to $\M=(\D,\x)$.
\end{dfn}

 Similarly to the remark about the conjugate minimal time-like surface, we have: The one-parameter family of 
minimal time-like surfaces associated to a given one is locally defined and is determined up to a motion in $\RR^n_1$.

 Note that, unlike the case of minimal space-like surfaces, the minimal time-like surface, conjugate to a given one,
does not belong to the family of the minimal time-like surfaces, associated to the given one. This is because 
$\jj \neq \e^{\jj\theta}$ for every $\theta \in \RR $.

If $\Phi_\theta(t)$ and $E_\theta(t)$ are the corresponding functions for $\M_\theta$, then it follows from the 
definition and \eqref{EG-tl} that:
\begin{equation}\label{Phi_1-param_family-tl}
\Phi_\theta(t) = (\e^{\jj\theta}\Psi(t))' = \e^{\jj\theta}\Phi(t)\,; \qquad \|\Phi_\theta(t)\|^2=\|\Phi(t)\|^2\,;
\qquad  E_\theta(t) = E(t)\,.
\end{equation}

\medskip

 One of the basic properties of this family is that any two surfaces from the family are isometric to
each other.
\begin{prop}\label{Isom_M_theta-M-tl}
If $\M=(\D,\x)$ is a minimal time-like surface in $\RR^n_1$, given in isothermal coordinates $(u,v)\in\D$,
and $\M_\theta=(\D,\x_\theta)$ is its corresponding family of associated minimal time-like surfaces, then 
the map $\mathcal{F}_\theta: \x(u,v)\rightarrow \x_\theta(u,v)$ gives an \textbf{isometry} between $\M$ and
$\M_\theta$ for any $\theta$.
\end{prop}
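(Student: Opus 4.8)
The plan is to reduce the statement to the equality of the first fundamental forms of $\M$ and $\M_\theta$ over the common parameter domain $\D$. Since the associated surface $\M_\theta=(\D,\x_\theta)$ is parametrized over the same domain as $\M=(\D,\x)$, and the map $\mathcal{F}_\theta$ sends $\x(u,v)$ to the point $\x_\theta(u,v)$ carrying the \emph{same} parameters, the expression of $\mathcal{F}_\theta$ in the two charts $\x$ and $\x_\theta$ is the identity on $\D$ (locally, where $\x$ is an embedding, which is legitimate as all our considerations are local). Hence $\mathcal{F}_\theta$ is an isometry if and only if the two first fundamental forms coincide as quadratic forms in $du,dv$ at every point, i.e. $E_\theta=E$, $F_\theta=F$, $G_\theta=G$.

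First I would record that both surfaces are minimal time-like surfaces parametrized by isothermal coordinates: this is exactly what the construction of the associated family guarantees, since $\e^{\jj\theta}\Psi$ satisfies \eqref{Psi_cond-tl} and Theorem \ref{Min_x_Psi-thm-tl} applies. Consequently $F=F_\theta=0$ and $G=-E$, $G_\theta=-E_\theta$, so by \eqref{Idt-tl} the two forms read $\mathbf{I}=E\,(du^2-dv^2)$ and $\mathbf{I}_\theta=E_\theta\,(du^2-dv^2)$. The whole problem therefore collapses to the single scalar identity $E_\theta=E$.

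Next I would establish $E_\theta=E$ directly from the behaviour of $\Phi$ under passage to the associated family. By \eqref{Phi_1-param_family-tl} we have $\Phi_\theta=\e^{\jj\theta}\Phi$, and using the definition of the $\DD^n_1$-norm together with $\overline{\e^{\jj\theta}}=\e^{-\jj\theta}$ one computes $\|\Phi_\theta\|^2=\Phi_\theta\cdot\bar\Phi_\theta=\e^{\jj\theta}\e^{-\jj\theta}\,\Phi\cdot\bar\Phi=\|\Phi\|^2$, because $\e^{\jj\theta}\e^{-\jj\theta}=1$ --- the hyperbolic analogue of $|\e^{\ii\theta}|=1$. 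Invoking \eqref{EG-tl} then gives $E_\theta=\tfrac12\|\Phi_\theta\|^2=\tfrac12\|\Phi\|^2=E$, which is precisely the equality already recorded in \eqref{Phi_1-param_family-tl}.

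There is no genuine obstacle here; the proposition is essentially a corollary of the transformation formula \eqref{Phi_1-param_family-tl}. The only point deserving care is the computation $\e^{\jj\theta}\e^{-\jj\theta}=1$, i.e. that multiplication by the hyperbolic "rotation" $\e^{\jj\theta}$ preserves the $\DD$-norm; once this is in hand, equality of $E$ together with the shared isothermality forces $\mathbf{I}_\theta=\mathbf{I}$, and the parameter-preserving map $\mathcal{F}_\theta$ is an isometry between $\M$ and $\M_\theta$ for every $\theta$.
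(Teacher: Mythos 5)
Your proposal is correct and follows essentially the same route as the paper: the map $\mathcal{F}_\theta$ written in the coordinates $(u,v)$ is the identity on $\D$, so the isometry reduces to the equality $E_\theta=E$ of the coefficients of the first fundamental forms, which the paper records in \eqref{Phi_1-param_family-tl}. Your explicit verification that $\|\Phi_\theta\|^2=\|\Phi\|^2$ via $\e^{\jj\theta}\e^{-\jj\theta}=1$ merely spells out the computation behind \eqref{Phi_1-param_family-tl}, so there is no substantive difference.
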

\begin{proof}
The map $\mathcal{F}_\theta$, written in local coordinates $(u,v)$, coincides with the identity in $\D$. 
Then the assertion that $\mathcal{F}_\theta$ is an isometry follows from the equality $E_\theta = E$ of
the coefficients of the first fundamental forms at the corresponding points, which was established in
\eqref{Phi_1-param_family-tl}.  
\end{proof}

 If $\bar\M=(\D,\y(t))$ is the surface, conjugate to $\M=(\D,\x(t))$, then we also have a natural map between
$\M$ and $\bar\M$, given by the formula $\mathcal{F}: \x(t) \rightarrow \y(t)$. It is easily seen from the formulas 
immediately before Definition \ref{Conj_Min_Surf-tl} that in this case we have $\hat E(t)=-E(t)$. This equality
means that $\mathcal{F}$ is an anti-isometry. Thus we have:
\begin{prop}\label{AntiIsom_conj_M-M-tl}
Let $\M=(\D,\x)$ be a minimal time-like surface in $\RR^n_1$, given in isothermal coordinates $(u,v)\in\D$, and
$\bar\M=(\D,\y)$ be its conjugate minimal time-like surface. Then the map $\mathcal{F}: \x(u,v)\rightarrow \y(u,v)$ 
gives an \textbf{anti-isometry} between $\M$ and $\bar\M$.
\end{prop}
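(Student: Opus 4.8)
The plan is to reduce the claim to the computation of a single coefficient of the first fundamental form of $\bar\M$, exploiting that the map $\mathcal{F}$, written in the coordinates $(u,v)$, is nothing but the identity. Since both $\M$ and $\bar\M$ are parametrized by the same isothermal coordinates, by \eqref{Idt-tl} their first fundamental forms read $\mathbf{I}=E\,(du^2-dv^2)$ and $\hat{\mathbf{I}}=\hat E\,(du^2-dv^2)$, respectively. Hence $\mathcal{F}$ is an anti-isometry precisely when $\hat E=-E$ at corresponding points, and the entire argument comes down to verifying this one relation.

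To compute $\hat E$, first I would record the defining relations of the harmonic conjugate $\y$, namely $\y_u=\x_v$ and $\y_v=\x_u$. Forming the associated $\Phi$-function of $\y$ then gives
\[
\hat\Phi=\y_u+\jj\y_v=\x_v+\jj\x_u=\jj(\x_u+\jj\x_v)=\jj\Phi,
\]
which is the same relation $\hat\Phi=(\jj\Psi)'=\jj\Psi'=\jj\Phi$ already observed before Definition \ref{Conj_Min_Surf-tl}. The decisive algebraic fact is that in $\DD$ one has $|\jj|^2=\jj\bar\jj=-\jj^2=-1$, so that
\[
\|\hat\Phi\|^2=\|\jj\Phi\|^2=|\jj|^2\,\|\Phi\|^2=-\|\Phi\|^2.
\]
Substituting this into \eqref{EG-tl}, which expresses $E=\tfrac12\|\Phi\|^2$, immediately yields $\hat E=\tfrac12\|\hat\Phi\|^2=-\tfrac12\|\Phi\|^2=-E$, as required.

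With $\hat E=-E$ in hand the conclusion is direct: $\hat{\mathbf{I}}=-E\,(du^2-dv^2)=-\mathbf{I}$, so the pullback by $\mathcal{F}$ of the metric of $\bar\M$ equals the negative of the metric of $\M$, which is exactly the assertion that $\mathcal{F}$ is an anti-isometry.

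I do not expect a genuine obstacle here; the proof is a short computation once the correct objects are in place. The only points demanding care are conventional rather than substantive. First, one must make explicit that $\mathcal{F}$ is the coordinate identity, so that no Jacobian factor intervenes and the comparison of the two fundamental forms is pointwise. Second, one must keep careful track of the sign produced by $|\jj|^2=-1$, which is the sole source of the change of sign. It is worth remarking that $\hat E=-E>0$ is inconsistent with the standing normalization $E<0$; this is precisely why the conjugate surface was introduced in Definition \ref{Conj_Min_Surf-tl} with the extra coordinate change $t=\jj s$. For the present statement, however, $\bar\M$ is taken on the same domain $\D$ with $\mathcal{F}$ the coordinate identity, so no such adjustment is made and the orientation-convention issue does not affect the anti-isometry claim.
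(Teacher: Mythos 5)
Your proposal is correct and follows essentially the same route as the paper, which establishes the proposition via the observation (made in the text just before Definition \ref{Conj_Min_Surf-tl}) that $\hat\Phi=\jj\Phi$ and $\|\jj\Psi'\|^2=-\|\Psi'\|^2$, whence $\hat E=-E$ by \eqref{EG-tl}. Your explicit verification that $|\jj|^2=-1$ drives the sign change, and your remark distinguishing the identity-coordinate map here from the $t=\jj s$ reparametrization in Definition \ref{Conj_Min_Surf-tl}, simply make explicit what the paper leaves implicit.
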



\section{Relations between the Gauss curvature $K$ and the function $\Phi$ }\label{sect_K-Phi-tl}

 Let $\M=(\D,\x)$ be a minimal time-like surface in $\RR^n_1$, given in isothermal coordinates $(u,v)\in\D$ and
let $\vX_1$ and $\vX_2$ be the unit tangent vectors, oriented as the coordinate vectors $\x_u$ and $\x_v$, respectively.
Considering again the formula \eqref{PhiPr-tl} for $\Phi^{\prime \bot}$, we get:
\begin{equation*}
\Phi^{\prime \bot}=\sigma (\x_u,\x_u)+\jj\sigma (\x_u,\x_v)=-E(\sigma (\vX_1,\vX_1)+\jj\sigma (\vX_1,\vX_2))\,.
\end{equation*}
Further, we find $\|\Phi^{\prime \bot}\|^2$:
\begin{equation*}
{\|\Phi^{\prime \bot}\|}^2
=E^2(\sigma^2(\vX_1,\vX_1)-\sigma^2 (\vX_1,\vX_2))\,.
\end{equation*}
It follows from the last formula and \eqref{EG-tl} that:
\begin{equation}\label{s2+s2-tl}
\sigma^2(\vX_1,\vX_1)-\sigma^2 (\vX_1,\vX_2)=\frac{{\|\Phi^{\prime \bot}\|}^2}{E^2}=
\frac{4{\|\Phi^{\prime \bot}\|}^2}{\|\Phi\|^4}\:.
\end{equation}
Applying the last equality to the formula \eqref{K_sigma_Min-tl} for $K$, we obtain the first formula, expressing
$K$ through $\Phi$:
\begin{equation}\label{K_Phi-tl}
K= -\ds\frac{4{\|\Phi^{\prime \bot}\|}^2}{\|\Phi\|^4}\:.
\end{equation}

 This formula has the disadvantage, that the orthogonal projection $\Phi^{\prime \bot}$ of the holomorphic function 
$\Phi^{\prime}$ is not in general holomorphic. Next we find another representation of $\|\Phi^{\prime \bot}\|^2$ through
holomorphic functions. First we note that the equality $\Phi^2=0$ means that $\Phi$ and $\bar\Phi$ are mutually 
orthogonal with respect to the analogue of the Hermitian product ($a \cdot \bar b$) in $\DD^n_1$. It follows from here and
from formulas \eqref{Phi_def-tl} and \eqref{xuxv-tl}, that these functions form an orthogonal basis of $T_{p,\DD}(\M)$ 
at any point $p\in\M$. Therefore, the tangential projection of $\Phi^\prime$ is represented as follows:
\[
\Phi^{\prime\top}
=\ds\frac{\Phi^{\prime\top}\cdot\bar \Phi}{\|\Phi\|^2}\Phi+\ds\frac{\Phi^{\prime\top}\cdot \Phi}{\|\bar \Phi\|^2}\bar \Phi 
=\ds\frac{\Phi' \cdot \bar \Phi}{\|\Phi\|^2}\Phi + \ds\frac{\Phi' \cdot \Phi}{\|\bar \Phi\|^2}\bar \Phi\,.
\]

Differentiating $\Phi^2=0$ we get the following relation:
\begin{equation}\label{Phi.dPhi_dt-tl}
\Phi\cdot\Phi^\prime=0\,.
\end{equation}
Applying the last equality to the formula for $\Phi^{\prime\top}$, we find the projections of $\Phi'$:
\begin{equation}\label{Phipn-tl}
\Phi^{\prime\top}= \ds\frac{\Phi' \cdot \bar \Phi}{\|\Phi\|^2}\Phi\,; \quad\quad \Phi^{\prime\bot}=\Phi'-\Phi^{\prime\top}=
\Phi'-\ds\frac{\Phi' \cdot \bar \Phi}{\|\Phi\|^2}\Phi\,.
\end{equation}
Now, direct calculations lead to the equality:
\begin{equation}\label{mPhipn2-tl}
{\|\Phi^{\prime\bot}\|}^2 = \ds\frac{\|\Phi\|^2\|\Phi'\|^2-|\bar \Phi \cdot \Phi'|^2}{\|\Phi\|^2}\ .
\end{equation}
Taking into account \eqref{K_Phi-tl}, we get:
\begin{equation}\label{K_Phi.Phip-tl}
K= -\ds\frac{4(\|\Phi\|^2\|\Phi'\|^2-|\bar \Phi \cdot \Phi'|^2)}{\|\Phi\|^6}\;.
\end{equation} 
Denoting by $\Phi\wedge\Phi'$ the bivector product of $\Phi$ and $\Phi'$, then we have:
\[\|\Phi\wedge\Phi'\|^2=\|\Phi\|^2\|\Phi'\|^2-|\bar \Phi \cdot \Phi'|^2.\] Therefore, replacing in
\eqref{K_Phi.Phip-tl}, we obtain the following formula for the Gauss curvature:
\begin{equation}\label{K_Phi_bv-tl}
K= -\ds\frac{4\|\Phi\wedge\Phi'\|^2}{\|\Phi\|^6}\;.
\end{equation}
 The last formula for $K$ has the advantage over \eqref{K_Phi-tl}, that $\Phi\wedge\Phi'$ is a bivector holomorphic
function. 

\medskip
 
 Now we shall obtain another representation for the tangential projection $\Phi^{\prime\top}$ of $\Phi^\prime$.
Next we express the coefficient before $\Phi$ in equality \eqref{Phipn-tl} through $E$:
\[
\Phi' \cdot \bar \Phi = \frac{\partial\Phi}{\partial t} \cdot \bar \Phi = 
\frac{\partial(\Phi \cdot \bar \Phi)}{\partial t}- \Phi \cdot \frac{\partial \bar \Phi}{\partial t}=
\frac{\partial(\|\Phi\|^2)}{\partial t}\;.
\]
 In view of equality \eqref{EG-tl}, we obtain for the coefficient before $\Phi$ in \eqref{Phipn-tl} the following:
\begin{equation}\label{d_ln_E_dt-tl}
\ds\frac{\Phi' \cdot \bar \Phi}{\|\Phi\|^2}=
\frac{\partial(\|\Phi\|^2)}{\partial t} \frac{1}{\|\Phi\|^2}=
\frac{\partial E}{\partial t} \frac{1}{E} = \frac{\partial\ln |E|}{\partial t} \;.
\end{equation}
Hence:
\begin{equation}\label{Phi_p-Tang-tl}
\Phi^{\prime\top}=
\frac{\partial\ln |E|}{\partial t}\Phi \,. 
\end{equation}

 Finally, we shall obtain the classical formula for the Gauss curvature $K$ in isothermal coordinates, 
expressed through the second derivatives of $E$, which is the same through the second derivatives of 
$\|\Phi\|^2$, according to \eqref{EG-tl}. In order to obtain this equation, we use \eqref{Phi_p-Tang-tl} 
and write the orthogonal decomposition of $\Phi'$:
\begin{equation}\label{Phi_p-Ort_1-tl}
\Phi'= \Phi^{\prime\top} + \Phi^{\prime\bot} =
\frac{\partial\ln |E|}{\partial t}\Phi + \Phi^{\prime\bot} .
\end{equation}
Differentiating the last equality with respect to $\bar t$ and using that $\Phi'$ and $\Phi$ are holomorphic,
we get:
\begin{equation*}
0=\frac{\partial^2\ln |E|}{\partial\bar t\partial t}\Phi + \frac{\partial (\Phi^{\prime\bot})}{\partial\bar t}\;.
\end{equation*}
We multiply the last equality with $\bar\Phi$ and find:
\begin{equation}\label{Delta_lnE_Phi_1-tl}
\frac{\partial^2\ln |E|}{\partial\bar t\partial t}\|\Phi\|^2 +
\frac{\partial (\Phi^{\prime\bot})}{\partial\bar t} \bar\Phi = 0\,.
\end{equation}
Applying $\frac{\partial}{\partial\bar t} \frac{\partial}{\partial t} = \frac{1}{4}\Delta^h$ and $\|\Phi\|^2=2E$
to the first addend, we obtain:
\[
\frac{\partial^2\ln |E|}{\partial\bar t\partial t}\|\Phi\|^2=
\frac{E\Delta^h\ln |E|}{2}\;.
\]
For the second addend we have:
\[
\begin{array}{rl}
\ds\frac{\partial (\Phi^{\prime\bot})}{\partial\bar t} \bar\Phi 
            &=\ds\frac{\partial (\Phi^{\prime\bot}\cdot\bar\Phi)}{\partial\bar t} -
              \Phi^{\prime\bot} \frac{\partial \bar\Phi}{\partial\bar t}=
              0-\Phi^{\prime\bot}\overline{\Phi'}\\[1.3ex]
						&=-\Phi^{\prime\bot}\overline{\Phi'}{}^\bot=
						  -\Phi^{\prime\bot}\overline{\Phi^{\prime\bot}}=-\|\Phi^{\prime\bot}\|^2 .
\end{array}
\]
Replacing the new expressions for the addends in \eqref{Delta_lnE_Phi_1-tl}, we obtain:
\begin{equation*}
\frac{E\Delta^h\ln |E|}{2} - \|\Phi^{\prime\bot}\|^2 = 0\,.
\end{equation*}
Taking into account \eqref{K_Phi-tl}, we find:
\begin{equation*}
2E\Delta^h\ln |E| + K\|\Phi\|^4 = 0\,.
\end{equation*}
In view of the equality $\|\Phi\|^2=2E$, we obtain:
\begin{equation}\label{Delta_lnE_2K-tl}
\frac{\Delta^h\ln |E|}{E} + 2K = 0\,.
\end{equation}
This is the classical fundamental Gauss equation for a minimal time-like surface, given in isothermal coordinates.

 Now, using formula \eqref{Delta_lnE_2K-tl}, we find another expression for the Gauss curvature $K$ through $\Phi$:
\begin{equation}\label{K_Delta_lnE_lnPhi-tl}
K = \frac{\Delta^h\ln |E|}{-2E} = \frac{\Delta^h\ln (-\|\Phi\|^2)}{-\|\Phi\|^2}\;.
\end{equation}


\section{Existence and uniqueness of canonical coordinates on a minimal time-like surface in $\RR^n_1$}\label{sect_can_Rn1-def-tl}

 In the previous considerations of minimal time-like surfaces in $\RR^n_1$ we have used isothermal coordinates.
It is known that the minimal time-like surfaces in $\RR^3_1$ or in $\RR^4_1$ admit special isothermal coordinates,
which have additional properties.

For example, in \cite {G-3} it is shown that on a minimal time-like surface in $\RR^3_1$ with $K<0$, there exist
local parameters, which are in the same time \emph{principal} and \emph{isothermal}. In terms of the standard 
denotations for the coefficients of the second fundamental form $L$, $M$ and $N$, this means that:
$E=-G$, $F=0$ and $M=0$\,. Something more, these coordinates can be normalized in such a way, that $L=N=1$\,. 

These properties determine the local coordinates uniquely up to the orientation of the coordinate lines.
Further we call these coordinates \emph{canonical coordinates}. In the case of $K>0$ it is shown that these 
surfaces in $\RR^3_1$ carry locally canonical coordinates, which are both \emph{asymptotic} and \emph{isothermal}. 
These canonical coordinates are characterized by the conditions $L=N=0$ and $M=\pm 1$\,.

 In \cite{G-M-1} it is proved that any surface of a relatively general class of minimal time-like surfaces 
in $\RR^4_1$ carries locally special isothermal coordinates $(u,v)$, which in our denotations are characterized
as follows:
\begin{equation}\label{Can_GM1-tl}
\begin{array}{ll}
-\x_u^2=\x_v^2>0\,, &
\sigma^2(\x_u,\x_u)+\sigma^2(\x_u,\x_v)=1\,,\\
\x_u \cdot \x_v = 0\,, \qquad &
\sigma(\x_u,\x_u) \cdot \sigma(\x_u,\x_v)=0\,.
\end{array}
\end{equation}
These properties of the local coordinates determine them uniquely up to the orientation of the coordinate lines
and further we call them \emph{canonical coordinates}.

Now, let us see how the properties of the canonical coordinates in $\RR^3_1$ and in $\RR^4_1$ can be expressed 
through the function $\Phi$, given by \eqref{Phi_def-tl}. To that end, let us consider equations \eqref{PhiPr-tl}. 
Taking the scalar square of the second equality, we find:
\begin{equation}\label{PhiPr_bot^2-tl}
{\Phi^{\prime \bot}}^2=\sigma^2(\x_u,\x_u)+\sigma^2(\x_u,\x_v)+2\jj\,\sigma(\x_u,\x_u)\cdot\sigma(\x_u,\x_v)\;.
\end{equation}
If $\M$ is a minimal time-like surface in $\RR^3_1$, given in canonical principal coordinates, then the conditions
$M=0$ and $L=1$ mean that $\sigma(\x_u,\x_v)=0$ and $\sigma^2(\x_u,\x_u)=1$\,. Then, it follows from
\eqref{PhiPr_bot^2-tl} that ${\Phi^{\prime \bot}}^2=1$\,.\, If $\M$ is given in canonical asymptotic coordinates, 
then the conditions $M=\pm 1$ and $L=0$ mean that $\sigma^2(\x_u,\x_v)=1$ and $\sigma(\x_u,\x_u)=0$\,, which 
again gives ${\Phi^{\prime \bot}}^2=1$\,. If $\M$ is a minimal time-like surface in $\RR^4_1$, given in canonical 
coordinates, then the first two equations of \eqref{Can_GM1-tl} give that the coordinates are isothermal, 
and the other two equalities again give ${\Phi^{\prime \bot}}^2=1$, according to \eqref{PhiPr_bot^2-tl}.

 The above observations on the minimal time-like surfaces in $\RR^3_1$ and $\RR^4_1$ show how with the aid of 
the function $\Phi$, we can generalize the notion of canonical coordinates on a minimal time-like surface in
$\RR^n_1, \; n \ge 3$\,.

 We give the following:
\begin{dfn}\label{Can1-def-tl}
Let $\M$ be a minimal time-like surface in $\RR^n_1$, given in isothermal coordinates $(u,v)$, such that $E<0$. 
These coordinates are said to be \textbf{canonical coordinates}, if the function $\Phi$, given by \eqref{Phi_def-tl}, 
satisfies the condition ${\Phi^{\prime \bot}}^2=1$\,.
\end{dfn}

 With the help of equality \eqref{PhiPr_bot^2-tl} we characterize the canonical coordinates through the second 
fundamental form $\sigma$ as follows:
\begin{prop}\label{Can1-sigma-tl}
Let $\M$ be a minimal time-like surface in $\RR^n_1$, given in isothermal coordinates $(u,v)$, such that $E<0$\,.
These coordinates are canonical if and only if the second fundamental form $\sigma$ satisfies the properties:
\begin{equation}\label{sigma_uv_can1-tl}
\sigma (\x_u,\x_u)\bot \: \sigma(\x_u,\x_v)\,,\qquad \sigma^2(\x_u,\x_u)+\sigma^2(\x_u,\x_v)=1\,.
\end{equation}
\end{prop}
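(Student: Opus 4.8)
The plan is to read the result directly off the already-established identity \eqref{PhiPr_bot^2-tl} by separating its right-hand side into its "real" part and its $\jj$-part. The crucial preliminary observation is that the quantities appearing there are genuinely real numbers: since $\x_u$ and $\x_v$ are real vectors and $\sigma$ takes values in the real normal space $N_p(\M)$, the scalar products $\sigma^2(\x_u,\x_u)$, $\sigma^2(\x_u,\x_v)$ and $\sigma(\x_u,\x_u)\cdot\sigma(\x_u,\x_v)$ all lie in $\RR$. Hence \eqref{PhiPr_bot^2-tl} exhibits ${\Phi^{\prime\bot}}^2$ in the standard form $a+\jj b$ with
\[
a=\sigma^2(\x_u,\x_u)+\sigma^2(\x_u,\x_v)\in\RR\,,\qquad b=2\,\sigma(\x_u,\x_u)\cdot\sigma(\x_u,\x_v)\in\RR\,.
\]

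Next I would invoke the linear independence of $1$ and $\jj$ over $\RR$: a double number $a+\jj b$ equals $1=1+\jj\,0$ if and only if $a=1$ and $b=0$. By Definition \ref{Can1-def-tl} the coordinates $(u,v)$ are canonical precisely when ${\Phi^{\prime\bot}}^2=1$, so applying this criterion to the expression above shows that the canonical condition is equivalent to the simultaneous validity of $a=1$ and $b=0$, that is,
\[
\sigma^2(\x_u,\x_u)+\sigma^2(\x_u,\x_v)=1\qquad\text{and}\qquad \sigma(\x_u,\x_u)\cdot\sigma(\x_u,\x_v)=0\,.
\]
The second equality says exactly that $\sigma(\x_u,\x_u)\bot\sigma(\x_u,\x_v)$, and together these are the two conditions \eqref{sigma_uv_can1-tl}. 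Since every implication used is in fact an equivalence, both directions of the "if and only if" are obtained simultaneously, and no separate argument for the converse is required.

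There is essentially no analytic obstacle here; the proposition is a transcription of \eqref{PhiPr_bot^2-tl} into the language of $\sigma$. The only point deserving care is the reality of the coefficients: one must confirm that the $\jj$-component of ${\Phi^{\prime\bot}}^2$ is the single real number $2\,\sigma(\x_u,\x_u)\cdot\sigma(\x_u,\x_v)$ and not some genuinely $\DD$-valued quantity, for otherwise the clean splitting into two real scalar equations would fail. This is guaranteed by \eqref{PhiPr-tl}, which expresses $\Phi^{\prime\bot}=\sigma(\x_u,\x_u)+\jj\,\sigma(\x_u,\x_v)$ with both $\sigma(\x_u,\x_u)$ and $\sigma(\x_u,\x_v)$ real normal vectors; the scalar square then behaves formally like the square of a complex number with real vector coefficients, using $\jj^2=1$, so that its real and imaginary parts are honest real scalars.
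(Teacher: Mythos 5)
Your proof is correct and takes exactly the route the paper intends: the paper states Proposition \ref{Can1-sigma-tl} as an immediate consequence of identity \eqref{PhiPr_bot^2-tl} together with Definition \ref{Can1-def-tl}, which is precisely your argument of splitting ${\Phi^{\prime\bot}}^2$ into its real and $\jj$-components and using that $1,\jj$ are linearly independent over $\RR$. Your added care about the reality of the scalar coefficients is a sound (if brief) filling-in of what the paper leaves implicit.
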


 Next we study the existence and the uniqueness of canonical coordinates. First we establish how the function 
${\Phi^{\prime\bot}}^2$ is transformed under a motion of the surface in $\RR^n_1$ and under a change of the 
isothermal coordinates. If the surface $\hat\M$ is obtained from the surface $\M$ by motion $A$ in $\RR^n_1$
(possibly improper), it follows from \eqref{hat_Phi-Phi-mov-tl} that $\hat\Phi'(t)=A\Phi'(t)$. Since the 
subspaces $T_p(\M)$ and $N_p(\M)$ are invariant under motion, then the orthogonal projections of any vector
into these subspaces are also invariant. Thus we find:
\begin{equation}\label{hat_PhiPr-PhiPr-mov-tl}
\hat\Phi'(t)=A\Phi'(t); \quad \hat\Phi'^\bot(t)=A\Phi'^\bot(t);   
\quad \left.\hat\Phi'^\bot\right.^2(t)=\left.\Phi'^\bot\right.^2(t); 
\qquad A \in \mathbf{O}(\RR^n_1) \,.
\end{equation}

 Definition \ref{Can1-def-tl} is formulated purely analytical, but formulas \eqref{hat_PhiPr-PhiPr-mov-tl}
show that the canonical coordinates are geometrically related to the given minimal surface. The third formula 
gives that the canonical coordinates are invariant under a motion of the surface in $\RR^n_1$. We have: 
\begin{theorem}\label{Can_Move-tl}
 Let the surface $\hat\M$ be obtained from the surface $\M$ through motion in $\RR^n_1$. If $(u,v)$ are
canonical coordinates on $\M$, then they are also canonical on $\hat\M$.
\end{theorem}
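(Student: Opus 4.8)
The plan is to read the result straight off the transformation law \eqref{hat_PhiPr-PhiPr-mov-tl} established immediately above, since by Definition \ref{Can1-def-tl} the canonical property is encoded entirely in the single scalar condition ${\Phi^{\prime\bot}}^2 = 1$. So the whole task reduces to checking that the quantity ${\Phi^{\prime\bot}}^2$ is unchanged when $\M$ is replaced by its image $\hat\M$ under the motion, and then invoking the definition at both ends.

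First I would recall the set-up: the motion acts by $\hat\x(t) = A\x(t) + \vb$ with $A \in \mathbf{O}(\RR^n_1)$, and by \eqref{hat_Phi-Phi-mov-tl} the associated functions satisfy $\hat\Phi = A\Phi$. Differentiating in $t$ (the translation $\vb$ drops out) gives $\hat\Phi' = A\Phi'$, which is the first equality of \eqref{hat_PhiPr-PhiPr-mov-tl}. Next I would argue that the orthogonal projection onto the normal space is equivariant under $A$: because $A$ is a linear isometry of $\RR^n_1$, it carries $T_p(\M)$ and $N_p(\M)$ onto the tangent and normal spaces of $\hat\M$ at the corresponding point, so the normal component transforms as $\hat\Phi'^\bot = A\Phi'^\bot$, the second equality. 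Taking scalar squares and using that $A$ preserves the product $\va\cdot\vb$ in $\DD^n_1$ then yields ${\hat\Phi'^\bot}^2 = {\Phi'^\bot}^2$, which is the third equality of \eqref{hat_PhiPr-PhiPr-mov-tl}.

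Finally the conclusion is immediate: if $(u,v)$ are canonical on $\M$, then ${\Phi'^\bot}^2 = 1$ by Definition \ref{Can1-def-tl}, whence ${\hat\Phi'^\bot}^2 = 1$ as well, and this is precisely the condition for $(u,v)$ to be canonical on $\hat\M$. I do not expect any genuine obstacle here; the only point that warrants a line of justification is the equivariance of the orthogonal projection, and that is a formal consequence of $A$ being an isometry. The real content of the statement is conceptual rather than computational: although Definition \ref{Can1-def-tl} is phrased purely analytically, the theorem certifies that the canonical condition is invariant under the ambient isometry group, confirming the geometric nature of canonical coordinates.
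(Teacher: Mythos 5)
Your proposal is correct and follows essentially the same route as the paper: the paper derives the transformation law \eqref{hat_PhiPr-PhiPr-mov-tl} (using $\hat\Phi'=A\Phi'$, the equivariance of the tangent/normal projections under the isometry $A$, and the invariance of the scalar product) immediately before the theorem and reads the conclusion off its third equality together with Definition \ref{Can1-def-tl}, exactly as you do.
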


 Now we consider a change of the isothermal coordinates. Let $(u,v)$ be isothermal coordinates on the minimal 
time-like surface $\M$. At the denotation $t=u + \jj v$, let us make the change $t=t(s)$, where $s\in\DD$ is a
new variable, which determines new isothermal coordinates. Denote by $\tilde\Phi(s)$ the function, corresponding 
to the new coordinates $s$. Any change of the isothermal coordinates determines either a holomorphic or an 
anti-holomorphic map. We first consider the holomorphic case. Applying formula \eqref{Phi_s-hol-tl}, we have
$\tilde\Phi=\Phi t'$, from where $\tilde\Phi'_s=\Phi'_t t'^{\,2}+\Phi t'\,\!'$. Since $\Phi$ is tangent to $\M$, 
then $\Phi^\bot=0$ and therefore we have:
\begin{equation}\label{tildPhiPr2-tl}
\tilde\Phi_s'^\bot = \Phi_t'^\bot t^{\prime \, 2}\,; \qquad 
\left.\tilde\Phi_s^{\prime\bot}\right.^2 = {\Phi_t^{\prime \bot}}^2 t'^{\,4}\,.
\end{equation}

The case of an anti-holomorphic map is reduced to the special case $t=\bar s$. That is why it is sufficient 
to consider the last case. Then according to \eqref{Phi_s-t_bs-tl} we have: $\tilde\Phi(s)=\bar\Phi(\bar s)$, 
from where $\tilde\Phi_s'(s)=\overline{\Phi_t'}(\bar s)$. It follows from here that:
\begin{equation}\label{tildPhiPr2-t_bs-tl}
\tilde\Phi_s'^\bot(s) = \overline{\Phi_t'^\bot(\bar s)}\;; \qquad
\left.\tilde\Phi_s^{\prime\bot}\right.^2\!(s) = \overline{{\Phi_t^{\prime \bot}}^2 (\bar s)}\;.
\end{equation}
  
	Let $\DD_0$ be the set defined by \eqref{D0}. If ${\Phi_t^{\prime\bot}}^2\in\DD_0$, it easily follows from 
\eqref{tildPhiPr2-tl} and \eqref{tildPhiPr2-t_bs-tl}, that $\left.\tilde\Phi_s^{\prime\bot}\right.^2\in\DD_0$, 
since the set $\DD_0$ is closed with respect to a multiplication with an arbitrary number in $\DD$, and as well as 
with respect to a complex conjugation in $\DD$. Therefore the condition ${\Phi^{\prime\bot}}^2=1$ is impossible 
at any isothermal coordinates. This means that the points at which ${\Phi^{\prime\bot}}^2\in\DD_0$, have to be
considered separately. We give the following:
\begin{dfn}\label{DegP-def-tl}
Let $\M$ be a minimal time-like surface in $\RR^n_1$, given in isothermal coordinates $(u,v)$. The point 
$p$ is said to be a \textbf{degenerate point} on $\M$, if the function $\Phi$, defined by \eqref{Phi_def-tl},
satisfies the condition ${\Phi^{\prime \bot}}^2(p)\in\DD_0$.
\end{dfn}

 Since the above definition is analytic, we have to prove that it determines a geometric object. Indeed, the 
following statement is valid:
\begin{theorem}\label{DegP_Change_Move-tl}
 Let $\M$ be a minimal time-like surfaces in $\RR^n_1$, given in isothermal coordinates. The property of 
a point to be degenerate does not depend on the isothermal coordinates and is invariant under any motion 
of $\M$ in $\RR^n_1$.
\end{theorem}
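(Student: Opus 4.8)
The plan is to treat the two invariance claims separately, since each follows almost immediately from a transformation formula already established, together with the two closure properties of $\DD_0$ noted just before Definition \ref{DegP-def-tl}: that $\DD_0$ is closed under multiplication by an arbitrary element of $\DD$ and under complex conjugation in $\DD$.

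First, for invariance under a motion $A \in \mathbf{O}(\RR^n_1)$, I would simply invoke the third equality in \eqref{hat_PhiPr-PhiPr-mov-tl}, which asserts $\left.\hat\Phi'^\bot\right.^2(t) = \left.\Phi'^\bot\right.^2(t)$ at every point. Hence ${\Phi^{\prime\bot}}^2(p) \in \DD_0$ if and only if $\left.\hat\Phi'^\bot\right.^2(p) \in \DD_0$, so $p$ is degenerate on $\M$ exactly when it is degenerate on $\hat\M$.

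Next, for independence of the isothermal coordinates, I would use the fact that any admissible change $t = t(s)$ is either holomorphic or anti-holomorphic (Proposition \ref{Orient-holo-tl}), and that a general anti-holomorphic change factors as a holomorphic change followed by the conjugation $t = \bar s$. In the holomorphic case, formula \eqref{tildPhiPr2-tl} gives $\left.\tilde\Phi_s^{\prime\bot}\right.^2 = {\Phi_t^{\prime\bot}}^2\, t'^{\,4}$, so ${\Phi_t^{\prime\bot}}^2 \in \DD_0$ forces $\left.\tilde\Phi_s^{\prime\bot}\right.^2 \in \DD_0$ by closure under multiplication. For the change $t = \bar s$, formula \eqref{tildPhiPr2-t_bs-tl} gives $\left.\tilde\Phi_s^{\prime\bot}\right.^2(s) = \overline{{\Phi_t^{\prime\bot}}^2(\bar s)}$, and membership in $\DD_0$ is preserved by closure under conjugation. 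Composing these two cases handles the general anti-holomorphic change, so degeneracy in one isothermal chart forces degeneracy in every other. Since the inverse coordinate change is again of the same (holomorphic or anti-holomorphic) type, the implication runs in both directions, which yields the desired chart-independence.

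I expect no serious obstacle: every transformation formula required is already derived in Section \ref{sect_can_Rn1-def-tl}, and the entire algebraic content reduces to the two closure properties of $\DD_0$. The only points deserving a word of care are the reduction of an arbitrary anti-holomorphic change to the composition of a holomorphic change with $t = \bar s$, and the observation that the inverse of an admissible change is of the same type, so that the condition ${\Phi^{\prime\bot}}^2 \in \DD_0$ transfers both ways rather than in a single direction.
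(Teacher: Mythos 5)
Your proposal is correct and follows essentially the same route as the paper's own proof, which likewise derives motion-invariance from the third formula in \eqref{hat_PhiPr-PhiPr-mov-tl} and chart-independence from \eqref{tildPhiPr2-tl} and \eqref{tildPhiPr2-t_bs-tl} together with the closure of $\DD_0$ under multiplication and conjugation. Your added remarks on factoring a general anti-holomorphic change through $t=\bar s$ and on the inverse change being of the same type merely spell out details the paper leaves implicit.
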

\begin{proof}
 The independence of the property a point to be degenerate, as we noted above, is a direct corollary of
formulas \eqref{tildPhiPr2-tl} and \eqref{tildPhiPr2-t_bs-tl}. The invariance of this property under a motion in 
$\RR^n_1$ follows from the third formula in \eqref{hat_PhiPr-PhiPr-mov-tl}. 
\end{proof}

 In order to express the notion of a degenerate point through the second fundamental form $\sigma$, we again consider
equality \eqref{PhiPr_bot^2-tl}. This equality implies that:
\[
\Re{\Phi^{\prime \bot}}^2\pm\Im{\Phi^{\prime \bot}}^2 =
\sigma^2(\x_u,\x_u)+\sigma^2(\x_u,\x_v) \pm 2\sigma(\x_u,\x_u)\cdot\sigma(\x_u,\x_v)\,. 
\]
Since the normal space $N_p(\M)$ of a time-like surface in $\RR^n_1$ is with positive definite metric, then we have
the following inequality:
\begin{equation}\label{Re_pm_Im_PhiPr2_ge_0-tl}
\Re{\Phi^{\prime \bot}}^2\pm\Im{\Phi^{\prime \bot}}^2 = \big(\sigma(\x_u,\x_u) \pm \sigma(\x_u,\x_v)\big)^2 \ge 0\,.
\end{equation}
Taking into account Definitions \eqref{D0} and \eqref{D+} of the sets $\DD_0$ and $\DD_+$, respectively, and the 
last inequality, we obtain the following relation: 
\begin{equation}\label{PhiPr2_in_D0_or_D+-tl}
{\Phi^{\prime \bot}}^2 \in \DD_0 \cup \DD_+ \,.
\end{equation}
The case ${\Phi^{\prime \bot}}^2 \in \DD_0$ is exactly when the inequality \eqref{Re_pm_Im_PhiPr2_ge_0-tl} becomes
an equality. This means that $\sigma(\x_u,\x_u)=\pm\sigma(\x_u,\x_v)$. If we use the unit vectors $\vX_1$ and $\vX_2$, 
oriented as the coordinate vectors $\x_u$ and $\x_v$, respectively, then the last equality is equivalent to 
$\sigma(\vX_1,\vX_1)=\pm\sigma(\vX_1,\vX_2)$. Thus the degenerate points on $\M$ can be described by the
second fundamental form $\sigma$ as follows:
\begin{prop}\label{DegP-sigma-tl}
If $\M$ is a minimal time-like surface in $\RR^n_1$ and $p\in\M$, then the following conditions are equivalent:
\begin{enumerate}
	\item 
The point $p$ is degenerate: ${\Phi^{\prime \bot}}^2(p) \in \DD_0$.
  \item
For any orthonormal basis $(\vX_1,\vX_2)$ of $T_p(\M)$ the following equality $\sigma(\vX_1,\vX_1)=\pm\sigma(\vX_1,\vX_2)$ is valid.
  \item
There exist at least one orthonormal basis $(\vX_1,\vX_2)$ of $T_p(\M)$, for which\\ $\sigma(\vX_1,\vX_1)=\pm\sigma(\vX_1,\vX_2)$. 
\end{enumerate}
\end{prop}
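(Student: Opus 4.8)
The plan is to reduce all three conditions to a single frame-free property of the second fundamental form — the vanishing of $\sigma$ along a null tangent direction — and then exploit the fact that the null cone of the Lorentzian plane $T_p(\M)$ is fixed by the induced metric, independently of any chosen orthonormal frame. First I would simply invoke the computation carried out in the paragraph immediately preceding the statement: combining \eqref{PhiPr_bot^2-tl} with the inequality \eqref{Re_pm_Im_PhiPr2_ge_0-tl} and the positive-definiteness of the normal metric, the condition ${\Phi^{\prime \bot}}^2(p)\in\DD_0$ is equivalent to $\sigma(\x_u,\x_u)=\pm\sigma(\x_u,\x_v)$, and, after rescaling by $\x_u=\sqrt{-E}\,\vX_1$ and $\x_v=\sqrt{-E}\,\vX_2$ (so that the positive factor $-E$ cancels), this is the same as $\sigma(\vX_1,\vX_1)=\pm\sigma(\vX_1,\vX_2)$ for the distinguished basis oriented like $(\x_u,\x_v)$. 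Thus condition (1) is already equivalent to condition (3) evaluated at this one particular basis, and everything reduces to showing that the $\pm$ condition is independent of the orthonormal basis.

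The key step is to reinterpret the $\pm$ condition null-geometrically. Since $\M$ is minimal, the mean curvature vector $\vH$ vanishes, and because $\vH=\tfrac12\trace\sigma$ is computed with respect to the induced metric it is frame-free; hence for \emph{every} orthonormal basis $(\vX_1,\vX_2)$ with $\vX_1^2=-1$ one has $\sigma(\vX_2,\vX_2)=\sigma(\vX_1,\vX_1)$ by \eqref{H-def-tl}. The two null directions of $T_p(\M)$ are spanned by $\vX_1\pm\vX_2$, and a short bilinear computation using minimality gives
\[
\sigma(\vX_1\pm\vX_2,\vX_1\pm\vX_2)=2\big(\sigma(\vX_1,\vX_1)\pm\sigma(\vX_1,\vX_2)\big).
\]
Consequently $\sigma(\vX_1,\vX_1)=\pm\sigma(\vX_1,\vX_2)$ holds if and only if $\sigma$ vanishes on at least one of the two null directions $\vX_1\pm\vX_2$ of $T_p(\M)$.

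Now the basis-independence is transparent. The null cone of the Lorentzian plane $T_p(\M)$ consists of exactly two lines and is determined by the induced metric alone. For any orthonormal basis the vectors $\vX_1\pm\vX_2$ span precisely these two lines: a hyperbolic rotation (boost) merely rescales each null line by $\e^{\pm\theta}$, while a reflection at most interchanges the two, so the \emph{set} of the two null lines is fixed. Therefore the property ``$\sigma$ vanishes on at least one null direction of $T_p(\M)$'' makes no reference to a frame, and the equivalent condition $\sigma(\vX_1,\vX_1)=\pm\sigma(\vX_1,\vX_2)$ holds for one orthonormal basis if and only if it holds for all of them. Chaining this with Step 1 yields the full cycle: (1) $\Leftrightarrow$ [$\pm$ for the distinguished basis] $\Leftrightarrow$ [$\pm$ for every basis] $=$ (2) $\Leftrightarrow$ [$\pm$ for some basis] $=$ (3), the implication (2)$\Rightarrow$(3) being trivial since an orthonormal basis exists.

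The main obstacle is exactly this basis-independence step, which is subtle because the sign in $\pm$ may jump as the frame varies (a reflection or time-reversal swaps the two null lines and hence the two signs). The null-cone reformulation disposes of this cleanly, reducing the matter to the elementary fact that $\vX_1\pm\vX_2$ always spans the fixed null cone; as an alternative one could argue purely computationally, checking that under a boost the pair $\big(\sigma(\vX_1,\vX_1),\,\sigma(\vX_1,\vX_2)\big)$ transforms by a hyperbolic rotation that preserves the two lines $\{a=\pm b\}$ and that the reflections only interchange the signs, but the null-direction argument is shorter and conceptually cleaner.
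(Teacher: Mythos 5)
Your proof is correct, and its first half coincides with the paper's own proof: the paper justifies the proposition exactly by the computation you invoke, namely that \eqref{PhiPr_bot^2-tl} together with the positive definiteness of the metric of $N_p(\M)$ gives \eqref{Re_pm_Im_PhiPr2_ge_0-tl}, so that ${\Phi^{\prime\bot}}^2(p)\in\DD_0$ holds precisely when $\sigma(\x_u,\x_u)=\pm\sigma(\x_u,\x_v)$, equivalently $\sigma(\vX_1,\vX_1)=\pm\sigma(\vX_1,\vX_2)$ for the frame oriented along $(\x_u,\x_v)$. Where you genuinely depart from the paper is the second half. The paper stops at the distinguished coordinate frame and passes to the ``for any basis''/``for some basis'' formulations without comment; filling that gap along the paper's lines would require its invariance formulas \eqref{tildPhiPr2-tl} and \eqref{tildPhiPr2-t_bs-tl} for changes of isothermal coordinates plus the unstated fact that every orthonormal frame at $p$ arises as the normalized coordinate frame of some isothermal coordinates. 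Your null-direction argument settles this intrinsically: minimality gives $\sigma(\vX_2,\vX_2)=\sigma(\vX_1,\vX_1)$ in every orthonormal frame, hence
\[
\sigma(\vX_1\pm\vX_2,\vX_1\pm\vX_2)=2\bigl(\sigma(\vX_1,\vX_1)\pm\sigma(\vX_1,\vX_2)\bigr),
\]
so the $\pm$ condition is the frame-free statement that $\sigma$ annihilates one of the two null lines of the Lorentzian plane $T_p(\M)$; since the vectors $\vX_1\pm\vX_2$ of any orthonormal frame span exactly these two lines, and vanishing of $\sigma(\vY,\vY)$ depends only on the line of $\vY$, the equivalence of (2) and (3) follows. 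What each approach buys: the paper's route stays inside its $\Phi$-formalism and is shorter given the machinery already built, while yours is self-contained at the level of the second fundamental form, makes the only nontrivial point (basis independence, including the possible swap of the two signs under reflections) fully explicit, and exposes the geometric meaning of degeneracy as the existence of a null asymptotic direction for $\sigma$.
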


 The last proposition gives the relation between the set of the degenerate points and the set of zeroes of the 
Gauss curvature $K$.
\begin{prop}\label{FlatP-DegP-tl}
If $\M$ is a minimal time-like surface in $\RR^n_1$, then the set of the degenerate points of $\M$ is a subset of
the set of zeroes of the Gauss curvature $K$ of $\M$.
\end{prop}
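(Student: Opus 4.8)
The plan is to trace the degeneracy condition at a point through to the Gauss curvature formula \eqref{K_sigma_Min-tl}, so the whole argument reduces to a cancellation once the analytic condition has been rewritten in terms of $\sigma$. Let $p$ be a degenerate point of $\M$; I must show $K(p)=0$. First I would invoke Proposition \ref{DegP-sigma-tl}: its conditions (1) and (3) already translate the defining condition ${\Phi^{\prime \bot}}^2(p)\in\DD_0$ into the geometric statement that there exists an orthonormal basis $(\vX_1,\vX_2)$ of $T_p(\M)$ for which $\sigma(\vX_1,\vX_1)=\pm\sigma(\vX_1,\vX_2)$.

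Next I would take scalar squares of this vector equality. Since squaring removes the sign, $\sigma(\vX_1,\vX_1)=\pm\sigma(\vX_1,\vX_2)$ immediately gives $\sigma^2(\vX_1,\vX_1)=\sigma^2(\vX_1,\vX_2)$. Substituting into the minimal-surface Gauss formula \eqref{K_sigma_Min-tl}, namely $K=-\sigma^2(\vX_1,\vX_1)+\sigma^2(\vX_1,\vX_2)$, the two terms cancel and $K(p)=0$. Hence every degenerate point lies in the zero set of $K$, which is exactly the assertion.

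Alternatively one can argue entirely through $\Phi$, which I would mention as a cross-check: combining \eqref{s2+s2-tl} with \eqref{K_Phi-tl} expresses $K$ through $\|\Phi^{\prime \bot}\|^2$, so it suffices to show that ${\Phi^{\prime \bot}}^2\in\DD_0$ forces the real number $\|\Phi^{\prime \bot}\|^2=\sigma^2(\x_u,\x_u)-\sigma^2(\x_u,\x_v)$ to vanish. Reading ${\Phi^{\prime \bot}}^2$ off \eqref{PhiPr_bot^2-tl} and using that $\DD_0$ is cut out by $\Re{\Phi^{\prime \bot}}^2=\pm\Im{\Phi^{\prime \bot}}^2$ together with the nonnegativity \eqref{Re_pm_Im_PhiPr2_ge_0-tl}, one of the squares $\big(\sigma(\x_u,\x_u)\pm\sigma(\x_u,\x_v)\big)^2$ must be zero, giving $\sigma(\x_u,\x_u)=\pm\sigma(\x_u,\x_v)$ and hence $\|\Phi^{\prime \bot}\|^2=0$.

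I expect no real obstacle: the substantive content has already been packaged into Proposition \ref{DegP-sigma-tl} and the curvature formulas of Section \ref{sect_K-Phi-tl}, so the proof is essentially a one-line computation once the degeneracy condition is read in terms of $\sigma$. The only point deserving care is that I would claim \emph{only} the one-sided inclusion and not attempt equality: $K$ vanishes whenever $\sigma^2(\vX_1,\vX_1)=\sigma^2(\vX_1,\vX_2)$, whereas degeneracy requires the stronger $\sigma(\vX_1,\vX_1)=\pm\sigma(\vX_1,\vX_2)$, so when the codimension is at least two the inclusion may be strict.
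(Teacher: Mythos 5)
Your proposal is correct and follows the paper's proof exactly: the paper likewise invokes Proposition \ref{DegP-sigma-tl} to get $\sigma(\vX_1,\vX_1)=\pm\sigma(\vX_1,\vX_2)$ at a degenerate point and then concludes $K(p)=0$ from \eqref{K_sigma_Min-tl}, with the squaring step you spell out left implicit. Your closing remark that the inclusion is only one-sided is also consistent with the paper, where Proposition \ref{K=0-tl} shows zeroes of $K$ may instead be superconformal points.
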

\begin{proof}
 Let $p\in\M$ be a degenerate point and $(\vX_1,\vX_2)$ be an orthonormal basis of $T_p(\M)$.
Proposition \ref{DegP-sigma-tl} gives that $\sigma(\vX_1,\vX_1)=\pm\sigma(\vX_1,\vX_2)$. Now, it follows from
\eqref{K_sigma_Min-tl} that $K(p)=0$\,. 
\end{proof}

 As we showed above, canonical coordinates cannot be introduced in a neighborhood of a degenerate point
and that is why we will consider minimal time-like surfaces free of degenerate points. We give the following:
\begin{dfn}\label{Min_Surf_Gen_Typ-def-tl}
A minimal time-like surface $\M$ in $\RR^n_1$ is said to be of \textbf{general type} if it is free of degenerate 
points.  
\end{dfn}

 Taking into account the definition of a degenerate point and \eqref{PhiPr2_in_D0_or_D+-tl} we obtain the following:
\begin{prop}\label{Min_Surf_Gen_Typ-PhiPr2_in_D+-tl}
Let $\M$ be a minimal time-like surface in $\RR^n_1$, given in isothermal coordinates and $\DD_+$ be the set, 
defined by \eqref{D+}. Then $\M$ is of general type if and only if at any point the function $\Phi$ satisfies
the condition:
\begin{equation}\label{PhiPr2_in_D+-tl}
{\Phi^{\prime \bot}}^2 \in \DD_+ \,.
\end{equation}
\end{prop}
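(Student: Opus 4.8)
The plan is to reduce the statement directly to the dichotomy already established in \eqref{PhiPr2_in_D0_or_D+-tl}, combined with the disjointness of the sets $\DD_0$ and $\DD_+$. First I would unwind the two relevant definitions. By Definition \ref{Min_Surf_Gen_Typ-def-tl}, the surface $\M$ is of general type precisely when it contains no degenerate points, and by Definition \ref{DegP-def-tl} a point $p$ is degenerate exactly when ${\Phi^{\prime \bot}}^2(p) \in \DD_0$. Stringing these together, $\M$ is of general type if and only if at every point ${\Phi^{\prime \bot}}^2 \notin \DD_0$.

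The key observation is that $\DD_0$ and $\DD_+$ are disjoint: by the defining conditions \eqref{D0} and \eqref{D+}, every element of $\DD_0$ satisfies $|t|^2 = 0$, whereas every element of $\DD_+$ satisfies $|t|^2 > 0$, so $\DD_0 \cap \DD_+ = \emptyset$. Since \eqref{PhiPr2_in_D0_or_D+-tl} already guarantees that at every point ${\Phi^{\prime \bot}}^2 \in \DD_0 \cup \DD_+$, the condition ${\Phi^{\prime \bot}}^2 \notin \DD_0$ is equivalent to ${\Phi^{\prime \bot}}^2 \in \DD_+$. Combining this equivalence with the reformulation from the first paragraph yields the proposition at once.

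I do not expect a genuine obstacle here. The substantive content was already extracted in deriving the inequality \eqref{Re_pm_Im_PhiPr2_ge_0-tl} from the positive definiteness of the normal space of a time-like surface, which is exactly what produced the dichotomy \eqref{PhiPr2_in_D0_or_D+-tl}. Once that dichotomy is available, the present statement is a purely formal consequence of the definitions of \emph{general type} and of a \emph{degenerate point}. The only step deserving a moment's attention is confirming $\DD_0 \cap \DD_+ = \emptyset$, which is immediate from comparing the amplitude conditions appearing in \eqref{D0} and \eqref{D+}.
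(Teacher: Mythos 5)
Your proof is correct and takes essentially the same route as the paper, which obtains the proposition directly by combining Definition \ref{DegP-def-tl}, Definition \ref{Min_Surf_Gen_Typ-def-tl}, and the dichotomy \eqref{PhiPr2_in_D0_or_D+-tl}. The disjointness $\DD_0 \cap \DD_+ = \emptyset$, which you check explicitly from \eqref{D0} and \eqref{D+}, is the only step the paper leaves tacit.
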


 As we noted above, the projection $\Phi^{\prime \bot}$ of the function $\Phi'$ is not in general holomorphic, but we
shall prove that the scalar square ${\Phi^{\prime \bot}}^2$ is always a holomorphic function. In order to prove that, 
we again consider equalities \eqref{Phipn-tl}. Squaring the second of them, we get:
\[
{\Phi^{\prime\bot}}^2 =
{\Phi'}^2-2\Phi'\ds\frac{\Phi' \cdot \bar \Phi}{\|\Phi\|^2}\Phi+\left(\ds\frac{\Phi' \cdot \bar \Phi}{\|\Phi\|^2}\right)^2 \Phi^2.
\]
Applying $\Phi^2=0$ and $\Phi\cdot\Phi^\prime=0$ in the last equality, we find:
\begin{equation}\label{Phi_prim_bot^2-Phi_prim^2-tl}
{\Phi^{\prime \bot}}^2={\Phi^\prime}^2.
\end{equation}
Now, it follows that ${\Phi^{\prime \bot}}^2$ is a holomorphic function, because ${\Phi^\prime}$ and therefore 
${\Phi^\prime}^2$ are holomorphic functions.

\medskip

 Now we can clear up the question of existence of canonical coordinates. 

\begin{theorem}\label{Can_Coord-exist-tl}
Let $\M$ be a minimal time-like surface in $\RR^n_1$ of general type. Then $\M$ admits locally canonical coordinates.
\end{theorem}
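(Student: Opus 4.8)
The plan is to produce the canonical coordinates by a single holomorphic change of the isothermal parameter, exploiting the fact that the obstruction ${\Phi^{\prime\bot}}^2$ is a holomorphic $\DD$-valued function taking values in $\DD_+$. First I would set $g := {\Phi^{\prime\bot}}^2$. By \eqref{Phi_prim_bot^2-Phi_prim^2-tl} we have $g={\Phi'}^2$, which is holomorphic since $\Phi'$ is; and because $\M$ is of general type, Proposition \ref{Min_Surf_Gen_Typ-PhiPr2_in_D+-tl} guarantees $g(t)\in\DD_+$ at every point.

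The key ingredient is the transformation law \eqref{tildPhiPr2-tl}: under a holomorphic change $t=t(s)$ the obstruction scales as $\left.\tilde\Phi_s^{\prime\bot}\right.^2 = g(t(s))\,t'(s)^4$. Hence, to reach the canonical condition $\left.\tilde\Phi_s^{\prime\bot}\right.^2=1$ of Definition \ref{Can1-def-tl}, it suffices to exhibit a holomorphic change with $t'(s)^4 = 1/g(t(s))$.

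I would construct this change as follows. Since $g$ is holomorphic with values in $\DD_+$, its fourth root $g^{1/4}$ is a well-defined holomorphic function with values in $\DD_+$, as recalled in Section \ref{sect_preliminaries-tl} about roots in $\DD$. Define a new coordinate $s=s(t)$ as a local primitive of $g^{1/4}$, so that $s'(t)=g(t)^{1/4}$; such a primitive exists locally because every holomorphic function in $\DD$ admits one (split into the two diagonal components and integrate each real factor separately). Since $g^{1/4}\in\DD_+$ we have $|s'(t)|^2>0$, so by the inverse function theorem for holomorphic functions in $\DD$, again quoted in Section \ref{sect_preliminaries-tl}, the map $s(t)$ has a local holomorphic inverse $t(s)$ with $t'(s)=g(t(s))^{-1/4}$ and $|t'(s)|^2>0$. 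The latter inequality ensures that $s$ is an admissible isothermal coordinate with $\tilde E<0$. Substituting $t'(s)^4 = g(t(s))^{-1}$ into the transformation law yields $\left.\tilde\Phi_s^{\prime\bot}\right.^2 = g(t(s))\cdot g(t(s))^{-1}=1$, so $s$ is canonical.

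The main obstacle, and the place where the hypotheses are genuinely used, is the extraction of the fourth root. In $\DD$, which has zero divisors, roots are available only on $\DD_+$; this is precisely why the general-type assumption (forcing $g\in\DD_+$ rather than merely $g\in\DD_0\cup\DD_+$, cf. \eqref{PhiPr2_in_D0_or_D+-tl}) is indispensable and why degenerate points had to be excluded. Everything else — existence of the primitive and of the holomorphic inverse — is a direct appeal to the analytic machinery over $\DD$ set up in the preliminaries, and the concluding verification is the one-line substitution above.
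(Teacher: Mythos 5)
Your proposal is correct and follows essentially the same route as the paper's own proof: both use the transformation law \eqref{tildPhiPr2-tl} to reduce the problem to the equation $t'(s)^4\,{\Phi_t^{\prime\bot}}^2=1$, invoke the general-type hypothesis so that ${\Phi^{\prime\bot}}^2\in\DD_+$ admits a holomorphic fourth root (via the identification ${\Phi^{\prime\bot}}^2={\Phi'}^2$ from \eqref{Phi_prim_bot^2-Phi_prim^2-tl}), and then obtain $s$ as a primitive of $\sqrt[4]{{\Phi'}^2}$, with $|s'(t)|^2>0$ guaranteeing local invertibility and $\tilde E(s)<0$. The only cosmetic difference is that the paper phrases the construction as solving a separable ODE $ds=\sqrt[4]{{\Phi_t'}^2}\,dt$, while you phrase it directly as taking a primitive and inverting.
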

\begin{proof}
  Let $\M$ be given in isothermal coordinates, determined by the variable $t\in\DD$. Next we show how to find a 
holomorphic function $t(s)$, such that the coordinates, determined by the new variable $s\in\DD$, are canonical.
Using Definition \ref{Can1-def-tl} and \eqref{tildPhiPr2-tl} we see that the new coordinates determined by $s$ 
are canonical if
\begin{equation}\label{condcan-tl}
{\Phi_t^{\prime \bot}}^2 t'^{\,4} = \left.\tilde\Phi_s^{\prime\bot}\right.^2 = 1 \,.
\end{equation}
According to \eqref{PhiPr2_in_D+-tl} we have ${\Phi^{\prime \bot}}^2 \in \DD_+$, which means that 
$\sqrt[4]{{\Phi_t^{\prime \bot}}^2} \in \DD_+$ is a well defined differentiable function.
Taking a fourth root in \eqref{condcan-tl}, we obtain the following complex (over $\D$) ODE of the first order
for the function $t(s)$:
\begin{equation*}
\sqrt[4]{{\Phi_t^{\prime \bot}}^2}\:dt = ds\,.
\end{equation*}
According to \eqref{Phi_prim_bot^2-Phi_prim^2-tl} we can replace
${\Phi_t^{\prime \bot}}^2$ with ${\Phi_t^\prime}^2$ and then:
\begin{equation}\label{eqcan-tl}
ds = \sqrt[4]{{\Phi_t^\prime}^2}\:dt\,.
\end{equation}
Equation \eqref{eqcan-tl} is an ODE with separable variables. Since ${\Phi_t^\prime}^2$ is a holomorphic function
of $t$ and the fourth root is also a holomorphic function in $\DD_+$, then $\ds\sqrt[4]{{\Phi_t^\prime}^2}$ is 
a holomorphic function as well. Therefore we can obtain a solution to \eqref{eqcan-tl} through an integration:
\begin{equation}\label{eqcan-sol-tl}
s = \int\sqrt[4]{{\Phi_t^\prime}^2}\:dt\,.
\end{equation}
It follows form the last equality that $s'(t)=\sqrt[4]{{\Phi_t^{\prime\vphantom{\bot}}}^2} \in \DD_+$\,, 
from where $|s'(t)|^2>0$\,. Consequently, \eqref{eqcan-sol-tl} determines $s$ as a holomorphic and locally 
invertible function of $t$. This means that $s$ gives locally new isothermal coordinates. Further, the
inequality $|s'(t)|^2>0$\, guarantees the condition $\tilde E(s)<0$ for the new coordinates according 
to \eqref{E_s-hol-tl}. Equation \eqref{eqcan-tl} is equivalent to \eqref{condcan-tl} and therefore 
the new coordinates are canonical. 
\end{proof}

\begin{remark}
The condition ${\Phi^{\prime \bot}}^2 \in \DD_+$ shows that there do not exist isothermal coordinates, 
such that ${\Phi^{\prime \bot}}^2 = -1$\,. This means that the minimal time-like surfaces in $\RR^n_1$ 
of general type do not admit canonical coordinates of the second type, unlike the case of the minimal 
space-like surfaces in $\RR^n_1$.
\end{remark}
 
Next we consider the question of uniqueness of the canonical coordinates.
\begin{theorem}\label{Can_Coord-uniq-tl}
Let $\M$ be a minimal time-like surfaces in $\RR^n_1$ of general type and let $t\in\DD$ and $s\in\tilde\DD$ be
variables, which give canonical coordinates in a neighborhood of a given point on $\M$.
If $t$ and $s$ determine one and the same orientation on $\M$, then they are related by an equality 
of the type:
\begin{equation}\label{uniq-holo-tl}
t=\pm s+c\,.
\end{equation}
If $t$ and $s$ determine opposite orientations on $\M$, then they are related by an equality of the type:
\begin{equation}\label{uniq-antiholo-tl}
t=\pm \bar s + c\,.
\end{equation}
In the above equalities $c\in\DD$ is an arbitrary constant. 
\end{theorem}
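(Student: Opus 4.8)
The plan is to split the argument according to orientation by invoking Proposition \ref{Orient-holo-tl}, and in each case to reduce the uniqueness question to a single first-order ODE for the coordinate change, which I then solve inside the algebra $\DD$ by passing to the null-basis.

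First I would treat the case in which $t$ and $s$ induce the same orientation. By Proposition \ref{Orient-holo-tl} the change $t=t(s)$ is then holomorphic, so the transformation law \eqref{tildPhiPr2-tl} applies and gives $\left.\tilde\Phi_s^{\prime\bot}\right.^2 = {\Phi_t^{\prime\bot}}^2\, t'^{\,4}$. Since both coordinate systems are canonical, Definition \ref{Can1-def-tl} forces $\left.\tilde\Phi_s^{\prime\bot}\right.^2 = 1$ and ${\Phi_t^{\prime\bot}}^2 = 1$, whence $t'^{\,4}=1$. The key step is to solve this equation in $\DD$: writing $t'=a\qq+b\bar\qq$ in the null-basis and using the componentwise multiplication \eqref{sum-prod-qq}, the equation becomes $a^4=1$ and $b^4=1$, so $a,b\in\{1,-1\}$ at every point, and continuity of $t'$ on a connected neighborhood makes $a$ and $b$ constant. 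The admissibility condition $|t'|^2>0$ required of an isothermal change (see the discussion after \eqref{E_s-hol-tl}), which in the null-basis reads $ab>0$, rules out the remaining values $\pm\jj$ and leaves exactly $t'\equiv 1$ or $t'\equiv -1$. Integrating $dt=\pm\,ds$ then yields $t=\pm s+c$, which is \eqref{uniq-holo-tl}.

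For the case of opposite orientations I would reduce to the one just treated. By Proposition \ref{Orient-holo-tl} the change $t=t(s)$ is now anti-holomorphic, hence factors through the special change $s=\bar r$ followed by a holomorphic change $t=t(r)$. Introducing the intermediate variable $r=\bar s$ and applying the conjugation law \eqref{tildPhiPr2-t_bs-tl}, I obtain $\left.\Phi_r^{\prime\bot}\right.^2(r)=\overline{\left.\tilde\Phi_s^{\prime\bot}\right.^2(\bar r)}=\bar 1=1$, so the $r$-coordinates are again canonical. Moreover $r$ and $s$ have opposite orientations while $s$ and $t$ have opposite orientations, so $r$ and $t$ have the same orientation and are related by the holomorphic change $t=t(r)$. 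Applying the first case to the pair $r,t$ gives $t=\pm r+c=\pm\bar s+c$, which is \eqref{uniq-antiholo-tl}.

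The only genuine obstacle is the algebraic step of determining all fourth roots of unity in $\DD$ and selecting the admissible ones; everything else is a matter of assembling the transformation formulas \eqref{tildPhiPr2-tl} and \eqref{tildPhiPr2-t_bs-tl} established earlier. The null-basis representation \eqref{t-qq}--\eqref{sum-prod-qq} makes this step transparent, since it diagonalizes $t'^{\,4}=1$ into two independent real equations, and the positivity constraint $|t'|^2>0$ isolates the two admissible solutions $t'=\pm 1$.
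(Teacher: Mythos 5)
Your proof is correct and follows essentially the same route as the paper's: reduce each orientation case via \eqref{tildPhiPr2-tl} and \eqref{tildPhiPr2-t_bs-tl} to $t'^{\,4}=1$, discard $t'=\pm\jj$ using $|t'|^2>0$ from \eqref{E_s-hol-tl}, and handle the anti-holomorphic case through the auxiliary variable $r=\bar s$. Your null-basis computation of the fourth roots of unity and the explicit continuity argument for the constancy of $t'$ merely spell out steps the paper leaves implicit.
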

\begin{proof}
First we consider the case when $t$ is a holomorphic function of $s$. Then formula \eqref{tildPhiPr2-tl}
is applicable and since $t$ and $s$ give canonical coordinates, the following equalities are valid:
${\Phi_t^{\prime \bot}}^2 = \left.\tilde\Phi_s^{\prime\bot}\right.^2 = 1$\,.
Consequently \eqref{tildPhiPr2-tl} is reduced to $t'^4 = 1$, from where $t'(s) = \pm 1;\ \pm \jj$.
The equalities $t' = \pm \jj$ drop out, since they give $|t'|^2 = -1 < 0$\,, which is impossible according to 
\eqref{E_s-hol-tl}. Thus, it only remains $t'(s) = \pm 1$\,. The last is equivalent to \eqref{uniq-holo-tl}.

 Now, let $t$ be an anti-holomorphic function of $s$. Introducing an additional variable $r$ by
the equality $r=\bar s$, then formula \eqref{tildPhiPr2-t_bs-tl} is applicable for $r$ and $s$ and therefore
$r$ also determines canonical coordinates on $\M$. Since $t$ is a holomorphic function of $r$, then according 
to already proven previously, $t$ and $r$ satisfy \eqref{uniq-holo-tl}. It follows from here that $t$ and $s$ 
satisfy \eqref{uniq-antiholo-tl}. 
\end{proof}

\begin{remark}
The four relations \eqref{uniq-holo-tl} and \eqref{uniq-antiholo-tl} geometrically mean that the canonical 
coordinates are uniquely determined up to an orientation of the coordinate lines. 
\end{remark}

 Finally we consider the question how the canonical coordinates are changed under the basic geometric transformations
of the minimal time-like surface $\M$ in $\RR^n_1$.

 We have already proved in Theorems \ref{Can_Move-tl} and \ref{DegP_Change_Move-tl}\,, that the canonical coordinates
and the degenerate points and hence the property of a surface to be of general type are invariant under a motion in $\RR^n_1$.
Now we shall consider how the canonical coordinates are changed under a homothety in $\RR^n_1$.
\begin{theorem}\label{Can_Sim-tl}
Let $\M=(\D,\x)$ be a minimal time-like surface in $\RR^n_1$ and let $t\in\D\subset\DD$ be a variable, determining 
canonical coordinates on $\M$. If the surface $\hat\M=(\D,\hat\x)$ is obtained from $\M$ through a homothety
with coefficient $k>0$ in $\RR^n_1$, then $\hat\M$ is also a minimal time-like surface of general type
with canonical coordinates, determined by the variable $s$ given by $t=\frac{1}{\sqrt{k}}s$.
\end{theorem}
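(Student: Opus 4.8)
The plan is to reduce the whole statement to the known scaling behaviour of $\Phi$ under a homothety (Proposition \ref{Min_Surf_Hom-tl}) together with the coordinate-change law \eqref{tildPhiPr2-tl} for the invariant ${\Phi^{\prime\bot}}^2$. First I would recall that, by Proposition \ref{Min_Surf_Hom-tl} and formula \eqref{hat_Phi-Phi-hmt-tl}, the surface $\hat\M$ is again a minimal time-like surface parametrized by the same isothermal variable $t$, and its associated function satisfies $\hat\Phi(t)=k\Phi(t)$; differentiating, $\hat\Phi'(t)=k\Phi'(t)$. The cleanest way to track the normal scalar square is to invoke \eqref{Phi_prim_bot^2-Phi_prim^2-tl}, which holds for the $\Phi$-function of \emph{any} minimal time-like surface and in particular for $\hat\Phi$. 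This gives
\[
\left.\hat\Phi^{\prime\bot}\right.^2=\left.\hat\Phi'\right.^2=(k\Phi')^2=k^2{\Phi'}^2=k^2{\Phi^{\prime\bot}}^2,
\]
so the effect of the homothety on the invariant ${\Phi^{\prime\bot}}^2$ is simply multiplication by the positive real constant $k^2$. This detour through \eqref{Phi_prim_bot^2-Phi_prim^2-tl} lets me avoid a separate geometric argument that the homothety preserves the tangent and normal subspaces.

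Next I would use the hypothesis that $t$ is canonical on $\M$, i.e.\ ${\Phi^{\prime\bot}}^2=1$ by Definition \ref{Can1-def-tl}. Substituting into the displayed identity yields $\left.\hat\Phi^{\prime\bot}\right.^2=k^2$ at every point. Since $k>0$, the number $k^2$ is a positive real, hence $k^2\in\DD_+$, and Proposition \ref{Min_Surf_Gen_Typ-PhiPr2_in_D+-tl} immediately gives that $\hat\M$ is of general type. This disposes of the first assertion of the theorem.

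Finally I would exhibit canonical coordinates on $\hat\M$ by the explicit holomorphic change $t=\frac{1}{\sqrt{k}}\,s$, for which $t'(s)=\frac{1}{\sqrt{k}}$ and $|t'(s)|^2=\frac{1}{k}>0$; the positivity guarantees, via \eqref{E_s-hol-tl}, that $s$ is an admissible isothermal variable with $\tilde E(s)<0$. Applying the transformation law \eqref{tildPhiPr2-tl} to $\hat\M$ under this change,
\[
\left.\tilde{\hat\Phi}_s^{\prime\bot}\right.^2=\left.\hat\Phi_t^{\prime\bot}\right.^2\,t'^{\,4}=k^2\Bigl(\tfrac{1}{\sqrt{k}}\Bigr)^4=k^2\cdot\frac{1}{k^2}=1,
\]
so by Definition \ref{Can1-def-tl} the variable $s$ indeed gives canonical coordinates on $\hat\M$, as claimed.

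I do not expect a genuine obstacle here: the argument is essentially bookkeeping on the scaling weights. The only point requiring a little care is the passage from $\left.\hat\Phi^{\prime\bot}\right.^2=k^2$ to the coordinate change, i.e.\ solving $k^2\,t'^{\,4}=1$; this forces $t'^{\,4}=k^{-2}$, whose admissible (real, positive-amplitude) root is $t'=k^{-1/2}$, the remaining fourth roots being excluded exactly as in the proofs of Theorems \ref{Can_Coord-exist-tl} and \ref{Can_Coord-uniq-tl}. The choice $t=\frac{1}{\sqrt{k}}s$ is then one representative of the canonical coordinates, the others differing from it only by the orientation-of-coordinate-lines ambiguity described in Theorem \ref{Can_Coord-uniq-tl}.
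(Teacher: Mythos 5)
Your proof is correct and follows essentially the same route as the paper: scale $\hat\Phi=k\Phi$, deduce $\left.\hat\Phi^{\prime\bot}\right.^2=k^2\in\DD_+$ to get general type, then apply the transformation law \eqref{tildPhiPr2-tl} with $t'^{\,4}=1/k^2$ to obtain $\left.\tilde{\hat\Phi}_s^{\prime\bot}\right.^2=1$. Your only deviation is justifying the scaling of the normal part via the identity \eqref{Phi_prim_bot^2-Phi_prim^2-tl} rather than the paper's implicit observation that a homothety preserves tangent and normal subspaces; this is a harmless (indeed slightly more explicit) variant of the same argument.
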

\begin{proof}
 If the homothety is given by the equality $\hat\x=k\x$, then we have:  $\hat\Phi=k\Phi$, 
$\hat\Phi'_t=k\Phi'_t$ and respectively $\left.\hat\Phi_t^{\prime\bot}\right.^2 = k^2 {\Phi_t^{\prime \bot}}^2$.
Since $t$ determines canonical coordinates on $\M$, then the last equality is reduced to
$\left.\hat\Phi_t^{\prime\bot}\right.^2 = k^2$, from where it follows that $\hat\M$ is also of general type. 
On the other hand, the condition $t=\frac{1}{\sqrt{k}}s$ gives that $t'^{\,4}=\frac{1}{k^2}$.
Applying \eqref{tildPhiPr2-tl} to $\hat\M$, we get: 
\[
\left.\tilde{\hat\Phi}_s^{\prime\bot}\right.^2 = \left.\hat\Phi_t^{\prime\bot}\right.^2 t'^{\,4} =
k^2 \ds\frac{1}{k^2} = 1\,.
\]
This means that the variable $s$ gives canonical coordinates on $\hat\M$. 
\end{proof}

\begin{remark}
The formula $\left.\hat\Phi_t^{\prime\bot}\right.^2 = k^2 {\Phi_t^{\prime \bot}}^2$,
obtained in the proof of the last theorem shows that the property of a point to be degenerate is invariant under
a similarity in $\RR^n_1$.
\end{remark}

 Next we show how to obtain the canonical coordinates on the surfaces of the one-parameter family of minimal 
time-like surfaces associated to a given one. 
\begin{theorem}\label{Can_1-param_family-tl}
Let $\M=(\D,\x)$ be a minimal time-like surface of general type in $\RR^n_1$ and let $t\in\D\subset\DD$ be a variable,
giving canonical coordinates on $\M$. If $\M_\theta=(\D,\x_\theta)$ denotes the one-parameter family of minimal 
time-like surfaces associated to $\M$, then for any $\theta\in\RR$\,, $\M_\theta$ is also of general type and the variable 
$s$, introduced by $t=\e^{-\jj\frac{\theta}{2}} s$, determines canonical coordinates on $\M_\theta$.
\end{theorem}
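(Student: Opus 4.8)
The plan is to mirror the structure already established for the homothety case in Theorem \ref{Can_Sim-tl}, since the associated family transformation $\M_\theta$ is governed by the multiplication $\Phi_\theta = \e^{\jj\theta}\Phi$, exactly as in \eqref{Phi_1-param_family-tl}. First I would record how the scalar square of the normal projection transforms. From $\Phi_\theta(t) = \e^{\jj\theta}\Phi(t)$ we get $\Phi_\theta'(t) = \e^{\jj\theta}\Phi'(t)$, and since the family map $\mathcal{F}_\theta$ is the identity in local coordinates (Proposition \ref{Isom_M_theta-M-tl}), it preserves the tangent and normal spaces at corresponding points; hence the normal projection satisfies $\Phi_\theta'^\bot(t) = \e^{\jj\theta}\Phi'^\bot(t)$ and therefore
\[
\left.\Phi_\theta^{\prime\bot}\right.^2(t) = \e^{2\jj\theta}\,{\Phi_t^{\prime\bot}}^2 \,.
\]

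Next I would check that $\M_\theta$ is of general type. Because $\e^{2\jj\theta}\in\DD_+$ and $\DD_+$ is closed under multiplication, the condition ${\Phi_t^{\prime\bot}}^2\in\DD_+$ for $\M$ (Proposition \ref{Min_Surf_Gen_Typ-PhiPr2_in_D+-tl}) transfers directly to $\left.\Phi_\theta^{\prime\bot}\right.^2\in\DD_+$ for $\M_\theta$, so $\M_\theta$ is free of degenerate points. Since $t$ is canonical on $\M$, we have ${\Phi_t^{\prime\bot}}^2 = 1$ by Definition \ref{Can1-def-tl}, and the displayed transformation law reduces to $\left.\Phi_\theta^{\prime\bot}\right.^2(t) = \e^{2\jj\theta}$.

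Finally I would verify that the proposed change $t = \e^{-\jj\frac{\theta}{2}} s$ produces canonical coordinates on $\M_\theta$. This is a holomorphic change with $t'(s) = \e^{-\jj\frac{\theta}{2}}$, so $t'^{\,4} = \e^{-2\jj\theta}$, and $|t'(s)|^2 = |\e^{-\jj\theta/2}|^2 > 0$ guarantees admissibility via \eqref{E_s-hol-tl}. Applying the holomorphic transformation formula \eqref{tildPhiPr2-tl} to $\M_\theta$ then gives
\[
\left.\tilde{\Phi}_{\theta,s}^{\prime\bot}\right.^2
= \left.\Phi_{\theta,t}^{\prime\bot}\right.^2 t'^{\,4}
= \e^{2\jj\theta}\,\e^{-2\jj\theta} = 1 \,,
\]
so by Definition \ref{Can1-def-tl} the variable $s$ gives canonical coordinates on $\M_\theta$. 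I do not anticipate a serious obstacle here: the argument is essentially the homothety proof with the real coefficient $k^2$ replaced by the double number $\e^{2\jj\theta}$. The only point requiring a moment's care is confirming that $\e^{2\jj\theta}\in\DD_+$ (equivalently $\Re\e^{2\jj\theta}\pm\Im\e^{2\jj\theta} = \e^{\pm 2\theta} > 0$ via \eqref{D+}), which secures both the general-type conclusion and the consistency of taking the appropriate root when matching the exponent $-\theta/2$ in the change of variable.
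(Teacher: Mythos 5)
Your proof is correct and follows the paper's own argument almost verbatim: compute $\left.\Phi_{\theta}^{\prime\bot}\right.^2 = \e^{2\jj\theta}\,{\Phi^{\prime\bot}}^2 = \e^{2\jj\theta} \in \DD_+$ to conclude that $\M_\theta$ is of general type, then apply \eqref{tildPhiPr2-tl} with $t'^{\,4}=\e^{-2\jj\theta}$ to get $\left.\tilde\Phi_{\theta|s}^{\prime\bot}\right.^2 = 1$. One small repair: the identity $\Phi_\theta^{\prime\bot}=\e^{\jj\theta}\Phi^{\prime\bot}$ is not justified by $\mathcal{F}_\theta$ being an isometry (isometries between surfaces need not preserve ambient tangent or normal spaces), but by the fact that $\e^{\jj\theta}$ is an invertible double number, so $\x_{\theta u}$ and $\x_{\theta v}$ are hyperbolic rotations of $\x_u,\x_v$ spanning the same plane, whence $\M_\theta$ and $\M$ have identical tangent and normal spaces at corresponding points and the normal projection commutes with multiplication by $\e^{\jj\theta}$.
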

\begin{proof}
 From the formula \eqref{Phi_1-param_family-tl}, we find 
$\Phi_\theta = \e^{\jj\theta}\Phi$. This equality gives that $\Phi'_{\theta|t} = \e^{\jj\theta}\Phi'_t$ and respectively, 
$\left.\Phi_{\theta|t}^{\prime\bot}\right.^2 = \e^{2\jj\theta} {\Phi_t^{\prime \bot}}^2$.
Since $t$ determines canonical coordinates on $\M$, then the last equality reduces to
$\left.\Phi_{\theta|t}^{\prime\bot}\right.^2 = \e^{2\jj\theta}$, which implies that $\M_\theta$ is also 
of general type. On the other hand, the condition $t=\e^{-\jj\frac{\theta}{2}} s$ gives $t'^{\,4}=\e^{-2\jj\theta}$.
Now, applying \eqref{tildPhiPr2-tl} to $\M_\theta$, we obtain: 
\[
\left.\tilde\Phi_{\theta|s}^{\prime\bot}\right.^2 = \left.\Phi_{\theta|t}^{\prime\bot}\right.^2 t'^{\,4} =
\e^{2\jj\theta} \e^{-2\jj\theta} = 1\,,
\]
which means that $s$ determines canonical coordinates on $\M_\theta$. 
\end{proof}

\begin{remark}
The formula $\left.\Phi_{\theta|t}^{\prime\bot}\right.^2 = \e^{2\jj\theta} {\Phi_t^{\prime \bot}}^2$,
obtained in the proof of the last theorem shows that the property of a point to be degenerate is invariant for the
family of the minimal time-like surfaces associated to $\M$ in the following sense: If $p$ is a degenerate point on 
$\M$, then for any $\theta$ the point $\mathcal{F}_\theta(p)$, corresponding to $p$ under the standard isometry 
$\mathcal{F}_\theta$ between $\M$ and $\M_\theta$, defined in Proposition \ref{Isom_M_theta-M-tl}, is also a 
degenerate point on $\M_\theta$.
\end{remark}

 Finally we shall find canonical coordinates on the minimal time-like surface conjugate to a given one, which 
we introduced by Definition \ref{Conj_Min_Surf-tl}\,. 

\begin{theorem}\label{Can_Conj_Min_Surf-tl}
Let $\M=(\D,\x)$ be a minimal time-like surface of general type in $\RR^n_1$ and let $t\in\D\subset\DD$ be a
variable, giving canonical coordinates on $\M$. If $\bar\M=(\D,\y)$ denotes the minimal time-like surface 
conjugate to $\M$, then the surface $\bar\M$ is also of general type and the variable $s$ given by $t=\jj s$
determines canonical coordinates on $\bar\M$.
\end{theorem}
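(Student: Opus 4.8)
The plan is to follow the scheme of the proofs of Theorems~\ref{Can_Sim-tl} and~\ref{Can_1-param_family-tl}: I would express the function $\hat\Phi$ of the conjugate surface $\bar\M$ through $\Phi$, compute the scalar square of its normal part, and then read off both the general type and the canonical condition from the result. The natural starting point is formula~\eqref{Phi_conj-tl}, which for the variable $s$ defined by $t=\jj s$ gives $\hat\Phi(s)=\Phi(\jj s)$ together with $\hat E(s)=E(\jj s)<0$. The latter inequality guarantees that $s$ is an admissible isothermal variable, and by the construction preceding Definition~\ref{Conj_Min_Surf-tl} the surface $\bar\M$ is minimal time-like in these coordinates, so Definition~\ref{Can1-def-tl} becomes applicable once the condition on $\hat\Phi$ is verified.

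First I would record the key structural fact: the complexified tangent space of $\bar\M$ at the point with parameter $s$ is spanned by $\hat\Phi(s)$ and $\overline{\hat\Phi(s)}$, hence by $\Phi(\jj s)$ and $\overline{\Phi(\jj s)}$, which by~\eqref{xuxv-tl} is precisely the complexified tangent space of $\M$ at the point $\jj s$. Therefore the complexified normal spaces of the two surfaces coincide at the corresponding points, and the orthogonal projection $(\,\cdot\,)^\bot$ is one and the same $\DD$-linear operation for both. Differentiating $\hat\Phi(s)=\Phi(\jj s)$ and using that $\Phi$ is holomorphic (Theorem~\ref{Min_x_Phi-thm-tl}), I obtain $\hat\Phi_s'(s)=\jj\,\Phi'(\jj s)$; projecting and squaring, while pulling the scalar $\jj$ out of the shared projection, gives
\[
\hat\Phi_s^{\prime\bot}(s)=\jj\,\Phi^{\prime\bot}(\jj s),\qquad
\left.\hat\Phi_s^{\prime\bot}\right.^2(s)=\jj^2\left(\Phi^{\prime\bot}(\jj s)\right)^2 .
\]
Since $t$ is canonical on $\M$, Definition~\ref{Can1-def-tl} yields $\left(\Phi^{\prime\bot}(\jj s)\right)^2=1$, and together with $\jj^2=1$ this produces $\left.\hat\Phi_s^{\prime\bot}\right.^2(s)=1$. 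As $1\in\DD_+$, Proposition~\ref{Min_Surf_Gen_Typ-PhiPr2_in_D+-tl} shows that $\bar\M$ is of general type, and Definition~\ref{Can1-def-tl} shows that $s$ determines canonical coordinates on $\bar\M$, which is the assertion.

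The only delicate point is the coincidence of the normal projections of $\bar\M$ and $\M$ at corresponding points; once this is in place, the rest is the algebra of $\jj^2=1$. If one prefers to imitate the two-step pattern of the earlier proofs literally, the same conclusion follows by first noting that in the variable $t$ the conjugate surface carries the function $\jj\Phi$, whose tangent, and hence normal, directions coincide with those of $\M$ because $\jj$ is invertible; consequently the scalar square of the normal part of $(\jj\Phi)'$ equals $\jj^2{\Phi^{\prime\bot}}^2=1$. The admissible change $t=\jj s$ then contributes the factor $t'^{\,4}=\jj^4=1$ in~\eqref{tildPhiPr2-tl}, again yielding $\left.\hat\Phi_s^{\prime\bot}\right.^2=1$. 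I expect no essential obstacle beyond this bookkeeping, the situation being entirely parallel to the associated-family case of Theorem~\ref{Can_1-param_family-tl}.
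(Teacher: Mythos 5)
Your proof is correct and follows essentially the same route as the paper's: both start from \eqref{Phi_conj-tl}, differentiate $\hat\Phi(s)=\Phi(\jj s)$ to get $\hat\Phi'_s(s)=\jj\,\Phi'_t(\jj s)$, and use $\jj^2=1$ together with the canonicity of $t$ to conclude $\left.\hat\Phi_s^{\prime\bot}\right.^2(s)=1$, whence general type and canonicity follow. The only difference is that you make explicit the coincidence of the tangent (hence normal) projections of $\M$ and $\bar\M$ at corresponding points, a step the paper uses implicitly.
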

\begin{proof}
 From the notes just before Definition \ref{Conj_Min_Surf-tl} of a conjugate minimal 
time-like surface and from formulas \eqref{Phi_conj-tl} we know that the variable $s$ determines 
isothermal coordinates on $\bar\M$, such that $\hat E(s) < 0$\,. From \eqref{Phi_conj-tl} we get: 
$\hat\Phi(s) = \Phi(\jj s)$\,, $\hat\Phi'_s(s) = \jj\Phi'_t(\jj s)$ and respectively, 
$\left.\hat\Phi_s^{\prime\bot}\right.^2\!\! (s) = {\Phi_t^{\prime \bot}}^2\! (\jj s)$. 
Since $t$ gives canonical coordinates on $\M$, then the last equality implies that
$\left.\hat\Phi_s^{\prime\bot}\right.^2\!\! (s) = 1$\,.
This means that $\bar\M$ is also of general type and $s$ determines canonical coordinates on $\M$. 
\end{proof}

\begin{remark}
The equality $\left.\hat\Phi_s^{\prime\bot}\right.^2\!\! (s) = {\Phi_t^{\prime \bot}}^2\! (\jj s)$,
obtained in the proof of the last theorem shows that the property of a point to be degenerate is invariant under 
taking a conjugate minimal time-like surface in the following sense: If $p$ is a degenerate point on $\M$, then
the point $\mathcal{F}(p)$, corresponding to $p$ under the standard anti-isometry $\mathcal{F}$ between $\M$ 
and $\bar\M$, introduced in Proposition \ref{AntiIsom_conj_M-M-tl}\,, is also a degenerate point on $\bar\M$.
\end{remark}


\section{On the geometric meaning of the canonical coordinates on a minimal time-like surface in $\RR^n_1$}\label{sect_hyperb_Rn1-tl}

Let $\M$ be a regular time-like surface in $\RR^n_1$ and $(\vX_1, \vX_2)$ be an orthonormal basis of $T_p(\M)$ 
at $p\in\M$, such that $\vX_1^2=-1$\,. The unit "circle" $\{\vX \in T_p(\M) : X^2=1\}$ with center $p$ in the tangent 
plane $T_p(\M)$ is the following hyperbola: 
\begin{equation}\label{tangent_hyperbola-tl}
\chi_p: \; \vX= \varepsilon (\vX_1 \, \sinh \psi + \vX_2 \,\cosh \psi ); \quad \psi \in \RR; \; \varepsilon =\pm 1\,.
\end{equation}
Consider the following map $\mathcal{S} : \vY \in T_p(\M) \rightarrow \sigma(\vY,\vY) \in N_p(\M)$. 
For the image of the hyperbola $\chi_p$ under $\mathcal{S}$ we find:
\begin{equation}\label{hyperbola_curv-tl}
 \sigma(\vX, \vX) = \vH +
\frac{\sigma(\vX_1,\vX_1)+\sigma(\vX_2,\vX_2)}{2}\cosh (2\psi) + \sigma(\vX_1,\vX_2)\sinh (2\psi)\,.
\end{equation}
Suppose that the vectors $\ds{\frac{\sigma(\vX_1,\vX_1)+\sigma(\vX_2,\vX_2)}{2}}$ and 
$\sigma(\vX_1,\vX_2)$ are linearly independent or 
equivalently that the curvature tensor $R^N$ of the normal connection is not zero.
Then the last formula shows that the image of $\chi_p$ is a branch of a hyperbola with center $\vH(p)$ and two 
conjugate diameters determined by the vectors $\ds{\frac{\sigma(\vX_1,\vX_1)+\sigma(\vX_2,\vX_2)}{2}}$ and 
$\sigma(\vX_1,\vX_2)$. It is clear that the two vectors $\ds{\frac{\sigma(\vX_1,\vX_1)+\sigma(\vX_2,\vX_2)}{2}}$ 
and $\sigma(\vX_1,\vX_2)$ always lie in one and the same normal plane $\eta \subset N_p(\M)$, which is geometrically 
connected with the surface $\M$ at the point $p$.

We denote by $\mathscr{H}_p$ the hyperbola determined by \eqref{hyperbola_curv-tl} and call it the 
\emph{hyperbola of the normal curvature} of $\M$ at the point $p$. Therefore, we have the following characterization 
of the minimal time-like surfaces in $\RR^n_1$ through the hyperbola of the normal curvature:
\begin{prop}\label{Min_Surf-hyperbola-tl}
A regular time-like surface $\M$ in $\RR^n_1$ is minimal if and only if 
the hyperbola $\mathscr{H}_p$ of the normal curvature at any point $p$ is centered at $p$. 
\end{prop}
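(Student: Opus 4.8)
The plan is to read off the claim directly from the explicit image formula \eqref{hyperbola_curv-tl}, which expresses $\sigma(\vX,\vX)$ as the constant vector $\vH(p)$ plus a combination of $\cosh(2\psi)$ and $\sinh(2\psi)$ terms. The center of the hyperbola $\mathscr{H}_p$ is precisely the constant term in this parametrization, namely $\vH(p)$, since the trigonometric (hyperbolic) terms average out and the center of a branch of a hyperbola in the form $\vc + \va\cosh(2\psi) + \vb\sinh(2\psi)$ is the point $\vc$. So the geometric content I want to extract is: the center of $\mathscr{H}_p$ equals $\vH(p)$.

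From this observation the proposition follows as a biconditional. First I would argue the forward direction: if $\M$ is minimal, then by Definition \ref{MinS-def-tl} we have $\vH=0$ at every point, so the center $\vH(p)$ coincides with the origin $p$ of the normal space $N_p(\M)$, i.e.\ the hyperbola is centered at $p$. Conversely, if at every point $p$ the hyperbola $\mathscr{H}_p$ is centered at $p$, then its center $\vH(p)$ must be the zero vector of $N_p(\M)$ for all $p$; hence $\vH\equiv 0$ and $\M$ is minimal by Definition \ref{MinS-def-tl}. Both directions reduce entirely to the identification of the constant term in \eqref{hyperbola_curv-tl} with $\vH$.

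The only genuine subtlety—and the one step I would be careful to address—is the nondegeneracy hypothesis stated just before the proposition, namely that $\tfrac{1}{2}(\sigma(\vX_1,\vX_1)+\sigma(\vX_2,\vX_2))$ and $\sigma(\vX_1,\vX_2)$ be linearly independent, so that \eqref{hyperbola_curv-tl} really does describe a (branch of a) genuine hyperbola with a well-defined center rather than a degenerate locus. Under this assumption the center is unambiguously the constant term, and the argument above goes through without further computation. I would note that even when $R^N=0$ and the image degenerates to a line or a point, the statement remains meaningful if one interprets "centered at $p$" as "the affine offset $\vH(p)$ vanishes," so the equivalence with minimality is unaffected; the essential identity is simply that the translation part of the map $\mathcal{S}$ restricted to $\chi_p$ is exactly $\vH(p)$. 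I expect no real obstacle here: once \eqref{hyperbola_curv-tl} is in hand the proposition is immediate, and the main point is conceptual rather than computational—recognizing $\vH$ as the center of the normal-curvature hyperbola.
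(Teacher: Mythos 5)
Your proof is correct and follows essentially the same route as the paper: the paper likewise reads off from \eqref{hyperbola_curv-tl} that the center of $\mathscr{H}_p$ is the point determined by $\vH(p)$ in $N_p(\M)$, under the same nondegeneracy assumption (linear independence of $\tfrac{1}{2}(\sigma(\vX_1,\vX_1)+\sigma(\vX_2,\vX_2))$ and $\sigma(\vX_1,\vX_2)$, equivalently $R^N\neq 0$), so that minimality $\vH\equiv 0$ is equivalent to the hyperbola being centered at $p$. Nothing is missing; your extra remark on the degenerate case is a harmless addition beyond what the paper states.
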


 A point $p$, at which $\mathscr{H}_p$ is a rectangular hyperbola, is said to be a \emph{superconformal} point. 
Respectively, if the time-like surface $\M$ in $\RR^n_1$ consists of superconformal points, then it is said to be 
a \emph{superconformal} time-like surface in $\RR^n_1$. 

\medskip

 Since we study minimal time-like surfaces, further we suppose that $\M$ is minimal. 
Then we have $\vH=0$ and $\sigma(\vX_2,\vX_2)=\sigma(\vX_1,\vX_1)$. Hence, the formula \eqref{hyperbola_curv-tl}
gets the following form:
\begin{equation}\label{hyperbola_curv_min_surf-tl}
\sigma(\vX,\vX) = \sigma(\vX_1,\vX_1)\,\cosh (2\psi) + \sigma(\vX_1,\vX_2)\,\sinh (2\psi)\,.
\end{equation}

Let now $t=u+\jj v \in \D$ determine canonical coordinates on $\M=(\D,\x)$ according to Definition \ref{Can1-def-tl}\,.
Denote by $(\vX_1,\vX_2)$ the orthonormal tangent 
basis, whose vectors are oriented as the coordinate vectors $(\x_u,\x_v)$, respectively. Since the coordinates are 
canonical, then $\sigma (\x_u,\x_u)\bot \, \sigma(\x_u,\x_v)$ according to \eqref{sigma_uv_can1-tl}. 
The last condition is equivalent to
$\sigma (\vX_1,\vX_1)\bot \, \sigma(\vX_1,\vX_2)$. 
Suppose that $\sigma(\vX_1,\vX_1)$ and $\sigma(\vX_1,\vX_2)$ are not zero.
Thus, the canonical coordinates on $\M$ determine canonical 
orthonormal basis $(\n_1, \n_2)$ of the normal plane $\eta$, whose vectors are oriented as the vectors 
$(\sigma (\vX_1,\vX_1)\,,\,\sigma(\vX_1,\vX_2))$, respectively. Then we have:
\begin{equation}\label{sigma_nu_mu-tl}
\begin{array}{l}
\sigma (\vX_1,\vX_1)= \nu \, \n_1\,, \\
\sigma (\vX_1,\vX_2)= \mu \, \n_2\,, 
\end{array}
\end{equation}
where
\begin{equation}\label{numu_sigma-tl}
\begin{array}{cl}
   \nu   \!\!  & = \|\sigma (\vX_1,\vX_1)\|\,,\\
   \mu \!\!  & = \|\sigma (\vX_1,\vX_2)\|\,;
\end{array}
\qquad
\nu > 0\,, \; \mu > 0\,.
\end{equation}

It is clear that in canonical coordinates $\sigma (\vX_1,\vX_1)$ lies on the real axis of the hyperbola $\mathscr{H}_p$,
while $\sigma (\vX_1,\vX_2)$ lies on the conjugate axis of $\mathscr{H}_p$. Respectively, $\nu$ is the length of the 
real semi-axis, while $\mu$ is the length of the conjugate semi-axis, which implies 
that $\nu$ and $\mu$ are invariant functions of the minimal surface. Further, we have: 
\begin{equation}\label{E_mu_nu-tl}
\begin{array}{l}
\sigma (\x_u,\x_u)= -E\,\nu \, \n_1\,, \\
\sigma (\x_u,\x_v)= -E\,\mu \, \n_2\:. \\
\end{array}
\end{equation}
Using that the canonical coordinates satisfy  
$\sigma^2(\x_u,\x_u)+\sigma^2(\x_u,\x_v)=1$ according to \eqref{sigma_uv_can1-tl},
we obtain the equality $E^2(\mu^2+\nu^2) =1$\,, or
\[
E=\frac{-1}{\sqrt{\mu^2+\nu^2}}\:.
\] 

Denote by $\varkappa$ the sectional curvature of the normal plane $\eta$ with respect to the 
normal connection of $\M$. Then we have $\varkappa = R^N(\vX_1,\vX_2)\n_1\cdot\n_2$\,. This curvature is 
geometrically determined up to a sign. In canonical coordinates we have:
\begin{equation}\label{K_mu_nu-tl} 
K= -\nu^2+\mu^2, \qquad \varkappa = \pm 2\nu \mu\,. 
\end{equation}
Taking into account the last formulas, we get:
$$K^2+\varkappa^2=(\mu^2+\nu^2)^2.$$
Hence, in canonical coordinates
$$E = \frac{-1}{\sqrt[4]{K^2+\varkappa^2}}\,.$$

Equations \eqref{K_mu_nu-tl} show that the sign of the Gauss curvature determines the relation between
the invariants $\nu$ and $\mu$\,: 
\begin{equation*}
\begin{array}{l}
K < 0 \; \Leftrightarrow \; \text{the real semi-axis of  $\mathscr{H}_p$ is greater than the conjugate semi-axis}\,;\\
[2mm]
K = 0 \; \Leftrightarrow \; \text{$\mathscr{H}_p$ is an rectangular hyperbola}\,;\\ 
[2mm]
K > 0 \; \Leftrightarrow \; \text{the real semi-axis of  $\mathscr{H}_p$ is less than the conjugate semi-axis}\,.
\end{array}
\end{equation*}

Thus, the condition $K=0$ characterizes the superconformal points on the minimal time-like surface $\M$ of general type.
Combining with Propositions \ref{DegP-sigma-tl} and \ref{FlatP-DegP-tl} we obtain the following statement:

\begin{prop}\label{K=0-tl}
Let $p$ be a point on the minimal time-like surface $\M$ in $\RR^n_1$ and the Gauss curvature $K_p=0$\,. Then the point 
$p$ is either degenerate or superconformal\,.
\end{prop}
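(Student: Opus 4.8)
The plan is to argue by the natural dichotomy suggested by the conclusion, splitting according to whether $p$ is a degenerate point. If $p$ is degenerate, the first alternative holds and there is nothing more to do, so the entire content lies in the non-degenerate case, where I must deduce from $K_p=0$ that $p$ is superconformal, i.e.\ that the hyperbola of normal curvature $\mathscr{H}_p$ is rectangular. Throughout I fix an orthonormal basis $(\vX_1,\vX_2)$ of $T_p(\M)$ with $\vX_1^2=-1$, so that $\sigma(\vX_2,\vX_2)=\sigma(\vX_1,\vX_1)$ by minimality.

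The starting algebraic observation is the minimal form of the Gauss curvature, equation \eqref{K_sigma_Min-tl},
\[
K_p=-\sigma^2(\vX_1,\vX_1)+\sigma^2(\vX_1,\vX_2).
\]
Since the normal space $N_p(\M)$ of a time-like surface carries a positive definite metric, both scalar squares coincide with squared Euclidean norms, so $K_p=\|\sigma(\vX_1,\vX_2)\|^2-\|\sigma(\vX_1,\vX_1)\|^2$; hence $K_p=0$ is exactly the equality of norms $\|\sigma(\vX_1,\vX_1)\|=\|\sigma(\vX_1,\vX_2)\|$. This is the one relation I will feed into both the degenerate and the superconformal sides.

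For the non-degenerate case I would first note that non-degeneracy is an \emph{open} condition: by \eqref{PhiPr2_in_D0_or_D+-tl} we have ${\Phi^{\prime\bot}}^2\in\DD_0\cup\DD_+$, and $\DD_+$ is an open subset of $\DD$ while ${\Phi^{\prime\bot}}^2={\Phi'}^2$ is continuous (indeed holomorphic) by \eqref{Phi_prim_bot^2-Phi_prim^2-tl}. Thus if $p$ is non-degenerate, a whole neighborhood of $p$ is of general type, the canonical-coordinate analysis of this section applies there, and I may invoke the equivalence recorded just before the statement via \eqref{K_mu_nu-tl}: on a surface of general type $K$ vanishes at a point precisely when $\mathscr{H}_p$ is rectangular, that is, precisely at the superconformal points; since $K_p=0$, $p$ is superconformal. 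A more self-contained route reaches the same place pointwise: reading the asymptotic directions of $\mathscr{H}_p$ off \eqref{hyperbola_curv_min_surf-tl} as $\psi\to\pm\infty$ gives $\sigma(\vX_1,\vX_1)\pm\sigma(\vX_1,\vX_2)$, and these are orthogonal iff $\sigma^2(\vX_1,\vX_1)-\sigma^2(\vX_1,\vX_2)=-K_p=0$.

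The point I expect to be the main obstacle is the borderline configuration in which $\sigma(\vX_1,\vX_1)$ and $\sigma(\vX_1,\vX_2)$ are linearly dependent, so that $\mathscr{H}_p$ degenerates from a genuine hyperbola to a segment and the word ``rectangular'' loses meaning. Here I would use the norm equality forced by $K_p=0$: linear dependence together with $\|\sigma(\vX_1,\vX_1)\|=\|\sigma(\vX_1,\vX_2)\|$ yields either $\sigma(\vX_1,\vX_2)=\pm\,\sigma(\vX_1,\vX_1)$ or the simultaneous vanishing of both, i.e.\ in all cases $\sigma(\vX_1,\vX_1)=\pm\,\sigma(\vX_1,\vX_2)$, which by Proposition \ref{DegP-sigma-tl} means $p$ is degenerate. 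This closes the argument consistently: when the hyperbola is genuine, $K_p=0$ makes it rectangular and $p$ superconformal, and when it degenerates, $p$ falls into the degenerate alternative (compatibly with Proposition \ref{FlatP-DegP-tl}). Hence every point with $K_p=0$ is either degenerate or superconformal.
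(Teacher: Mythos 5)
Your proof is correct, and its skeleton is the same as the paper's: the paper's entire argument is the remark that in canonical coordinates $K=-\nu^2+\mu^2$ by \eqref{K_mu_nu-tl}, so on a surface of general type $K=0$ characterizes the superconformal points, combined with Propositions \ref{DegP-sigma-tl} and \ref{FlatP-DegP-tl}. What you add beyond the paper is worth recording. First, the openness argument (continuity of ${\Phi'}^{2}={\Phi^{\prime\bot}}^{2}$ via \eqref{Phi_prim_bot^2-Phi_prim^2-tl}, together with \eqref{PhiPr2_in_D0_or_D+-tl} and the openness of $\DD_+$) is exactly what licenses invoking canonical coordinates around a single non-degenerate point; the paper leaves this step implicit. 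Second, your treatment of the borderline case where $\sigma(\vX_1,\vX_1)$ and $\sigma(\vX_1,\vX_2)$ are linearly dependent is a genuine patch: the paper's construction of $\mathscr{H}_p$ explicitly assumes these vectors are linearly independent, so ``rectangular'' is undefined in the dependent case, and your observation that dependence together with the norm equality forced by $K_p=0$ yields $\sigma(\vX_1,\vX_1)=\pm\,\sigma(\vX_1,\vX_2)$ (hence degeneracy by Proposition \ref{DegP-sigma-tl}) closes this hole. Finally, your alternative pointwise route --- reading the asymptotic directions $\sigma(\vX_1,\vX_1)\pm\sigma(\vX_1,\vX_2)$ off \eqref{hyperbola_curv_min_surf-tl} and noting they are orthogonal precisely when $K_p=0$ --- is also sound, and has the merit of avoiding canonical coordinates (and hence the openness argument) altogether, since it uses only the positive definiteness of the metric on $N_p(\M)$ and equation \eqref{K_sigma_Min-tl}.
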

\begin{remark} Let $\M$ be a minimal time-like surface, parametrized by canonical parameters $(u,v)$. Making 
the special change of the parameters:
\[
u = \frac{\bar u + \bar v}{\sqrt 2}\:; \quad v= \frac{-\bar u + \bar v}{\sqrt 2}\:,
\]
we obtain that the new parametric lines are isotropic. As a corollary of the properties of the canonical parameters,
it follows that $(\bar u, \bar v)$ are also determined uniquely and can be considered as \textit{canonical isotropic}
parameters. All formulas in canonical parameters have their corresponding formulas in canonical isotropic parameters.
\end{remark}

\vspace{2ex}
\textbf{Acknowledgements}

The first author is partially supported by the National Science Fund, Ministry
of Education and Science of Bulgaria under contract DN 12/2.

\end{document}